\def\R{\mathbf{R}}
\def\1{\mathbf{1}}
\def\al{\alpha}
\def\be{\beta}
\def\pa{\partial}
\def\de{\delta}
\def\ka{\varkappa}
\newtheorem{prop}{Proposition}[section]
\newtheorem{theorem}{Theorem}[section]
\newtheorem{remark}{Remark}
\newcommand{\la}{\lambda}
\newcommand{\om}{\omega}
\begin{document}
\title{Mean-field-game model for Botnet defense in Cyber-security}
\author{
V. N. Kolokoltsov\thanks{Department of Statistics, University of Warwick,
 Coventry CV4 7AL UK,  Email: v.kolokoltsov@warwick.ac.uk and associate member  of Institute of Informatics Problems, FRC CSC RAS}
 and A. Bensoussan\thanks{School of Management, The University of Texas at Dallas, P.O. Box 830688, SM 30 Richardson US, Graduate School 
 of Business, The Hong Kong Polytechnic University, Hung Hom, Kowloon, Hong Kong, and Ajou University, Korea  }}
\maketitle

\begin{abstract}
We initiate the analysis of the response of computer owners to various offers of defence systems
 against a cyber-hacker (for instance, a botnet attack),
as a stochastic game of a large number of interacting agents.
We introduce a simple mean-field game that models their behavior.
It takes into account both the random process of the propagation
of the infection (controlled by the botner herder) and
the decision making process of customers.
Its stationary version turns out to be exactly solvable (but not at all trivial)
under an additional natural assumption that the execution time
of the decisions of the customers (say, switch on or out the defence system)
is much faster that the infection rates.
\end{abstract}

{\bf Mathematics Subject Classification (2010)}:
\smallskip\par\noindent
{\bf Key words}: botnet defence, mean-field game, stable equilibrium, phase transitions

\section{Introduction}

A botnet, or zombie network, is a network of computers infected with a malicious program that
 allows cybercriminals to control the infected machine remotely without the user's knowledge.
Botnets have become a source of income for entire groups of cybercriminals since the cost of
 running botnets is cheap and the risk of getting caught is relatively small due to the fact
  that other people's assets are used to launch attacks. The interactive process of the attackers
and defenders can be modeled as a Game. The use of game theory in modeling attacker-defender has
been extensively adopted in the computer security domain recently; see \cite{BenKaHo}, 
\cite {LiLiSt} and \cite{LyWi} and bibliography
there for more details.  Two aspects are important. The first one is the contamination effect.
The second one is the large number of computers. So, in fact, one deals with a stochastic game
 of a large number of interacting agents. This is amenable to Mean Field theory. To investigate
this approach represents the main objective of this paper. Our model takes into account both
 the random process of the propagation of the infection (controlled by the botnet herder) and
  the decision making process of customers. We develop a stationary version which turns out to
be exactly solvable (but not at all trivial) under an additional natural assumption that the
 execution time of the decisions of the customers (say, switch on or out the defense system)
  is much faster that the infection rates.
  
Similar models can be applied to the analysis of defense against a biological weapon, for instance by adding
the active agent (principal interested in spreading the disease), into the general mean-field epidemic model
of \cite{LiuTaYa} that extends the well established SIS (susceptible-infectious-susceptible) and
SIR (susceptible-infectious-recovered) models.

Mean-field games present a quickly developing area of the game theory.
It was initiated by Lasry-Lions \cite{LL2006} and Huang-Malhame-Caines
\cite{HCM3}, \cite{HCM07}, 
see \cite{Ba13}, \cite{BenFr}, \cite{GLL2010}, \cite{Gomsurv}, \cite{Cain14} for recent surveys,
as well as \cite{Car15}, \cite{Car13}, \cite{CarD13}, \cite{BasRa14}, \cite{KTY14}, 
\cite{TemBas14}, \cite{BaTemBa} and references therein.
The papers \cite{Gom10} and \cite{Gom13} initiated the study of finite-state space mean-field games
that are the objects of our analysis here.

The paper is organized as follows.
In the next section we introduce our model, formulate the basic mean-field game (MFG)
consistency problem in its dynamic and stationary versions leading to precise formulation of our main
problem of characterizing the stable solutions (equilibria) of the stationary problem.
This problem is a consistency problem between an HJB equation for a stochastic control of
individual players and a fixed point problem for an evolutionary dynamics. These two
preliminary problems are fully analyzed in Sections \ref{secHJB} and \ref{secfixedpoint} respectively.
Section \ref{secMFG} is devoted to the final synthesis of the stationary MFG problem from
the solutions to these two preliminary problems. In particular, the phase transitions and the bifurcation points
changing the number of solutions are explicitly found.
In the last section further perspectives are discussed.

\section{The model}

Assume that any computer can be in $4$ states: $DI, DS, UI, US$, where the first letter,
 $D$ or $U$, refers to the state of a defended
(by some system, which effectiveness we are trying to analyze) or an unprotected computer,
and the second letter, $S$ and $I$, to susceptible or infected state. The change between $D$ and $U$ is subject to
the decisions of computer owners (though the precise time of the execution of her intent is noisy)
 and the changes between $S$ and $I$ are random with distributions depending on the level of efforts
$v_H$ of the Herder and the state $D$ or $U$ of the computer.

Let $n_{DI}, n_{DS}, n_{UI}, n_{US}$ denote the numbers of computers in the corresponding states
with $N=n_{DS}+n_{DI}+n_{UI}+n_{US}$ the total number of computers.
By a state of the system we shall mean either the $4$-vector $n=(n_{DI}, n_{DS}, n_{UI}, n_{US})$
or its normalized version $x=(x_{DI}, x_{DS}, x_{UI}, x_{US})=n/N$.
 The fraction of defended computers $x_{DI}+x_{DS}$ represents the analogue of the
 control parameter $v_D$ from \cite{BenKaHo}, the level of defense of the system,
 though here it results as a compound effect of individual decisions of all players.

 The control parameter $u$ of each player may have two values, $0$ and $1$, meaning that the player is happy
 with the level of defense ($D$ or $I$) or she prefers to switch one to another. When the updating decision $1$
 is made, the updating effectively occurs after some exponential time with the parameter $\la$
 (measuring the speed of the response of the defense system). The limit $\la \to \infty$ corresponds to
 the immediate execution.

 The recovery rates (the rates of change from $I$ to $S$) are given constants $q^D_{rec}$ and $q^U_{rec}$
 for defended and unprotected computers respectively, and the rates of infection from the direct attacks
 are $v_Hq^D_{inf}$ and $v_Hq^U_{inf}$ respectively with constants $q^D_{inf}$ and $q^U_{inf}$.
 The rates of infection spreading from infected to susceptible computers are $\be_{UU}/N, \be_{UD}/N, \be_{DU}/N, \be_{DD}/N$,
 with numbers $\be_{UU}, \be_{UD}, \be_{DU}, \be_{DD}$,
 where the first (resp second) letter in the index refers to the state of the infected (resp. susceptible) computer
 (the scaling $1/N$ is necessary to make the rates of unilateral changes and binary interactions comparable in the
 $N\to \infty$ limit).

 Thus if all computers use the strategy $u_{DS}, u_{DI}, u_{US}, u_{UI}$, $u\in \{0,1\}$ and the level of attack is $v_H$,
the evolution of the frequencies $x$ in the limit $N\to \infty$ can be described by the following system of ODE:

 \begin{equation}
 \label{eqmainkineticbotnet}
 \left\{
 \begin{aligned}
& \dot x_{DI} =x_{DS} q_{inf}^D v_H -x_{DI} q_{rec}^D
 +x_{DI}x_{DS} \be_{DD}+x_{UI}x_{DS}\be_{UD}+\la (x_{UI}u_{UI}-x_{DI}u_{DI}), \\
& \dot x_{DS} =-x_{DS} q_{inf}^D v_H +x_{DI} q_{rec}^D
 -x_{DI}x_{DS} \be_{DD}-x_{UI}x_{DS}\be_{UD}+\la (x_{US}u_{US}-x_{DS}u_{DS}), \\
& \dot x_{UI} =x_{US} q_{inf}^U v_H -x_{UI} q_{rec}^U
 +x_{DI}x_{US} \be_{DU}+x_{UI}x_{US}\be_{UU}-\la (x_{UI}u_{UI}-x_{DI}u_{DI}), \\
& \dot x_{US} =-x_{US} q_{inf}^U v_H +x_{UI} q_{rec}^U
 -x_{DI}x_{US} \be_{DU}-x_{UI}x_{US}\be_{UU}-\la (x_{US}u_{US}-x_{DS}u_{DS}).
\end{aligned}
\right.
 \end{equation}

\begin{remark}
If all $\be_{UD}, \be_{UU}, \be_{DU}, \be_{UU}$ are equal to some $\be$, $q^D_{inf}=q^U_{inf}=q^{inf}$ and $q^D_{rec}=q^D_{rec}=v_D$,
where $v_D$ is interpreted as the defender group's combined defense effort, then summing up the first and the third equations
in \eqref{eqmainkineticbotnet} leads to the equation
\begin{equation}
 \label{eqmainkineticbotnetBen}
 \dot x =q_{inf} v_H (1-x)+\be x (1-x) -v_D x,
 \end{equation}
 for the total fraction of infected computers $x=x_{DI}+x_{UI}$. This equation coincides (up to some constants)
  with equation (2) from \cite{BenKaHo},
 which is the starting point of the analysis of paper \cite{BenKaHo}.
\end{remark}

It is instructive to see, how evolution \eqref{eqmainkineticbotnet} can be deduced rigorously as the limit of the Markov processes
specifying the random dynamics of $N$ players.
The generator of this Markov evolution on the states $n$ is (where  the unchanged values in the arguments of $F$ on the r.h.s
 are omitted)
 \[
 L_NF(n_{DI}, n_{DS}, n_{UI}, n_{US})=n_{DS} q_{inf}^D v_H F(n_{DS}-1, n_{DI}+1)+n_{US} q_{inf}^U v_H F(n_{US}-1, n_{UI}+1)
 \]
 \[
 + n_{DI} q_{rec}^D F(n_{DI}-1, n_{DS}+1)+n_{UI} q_{rec}^U F(n_{UI}-1, n_{US}+1)
 \]
 \[
 +n_{DI}n_{DS} \be_{DD} F(n_{DS}-1, n_{DI}+1)/N+n_{DI}n_{US} \be_{DU} F(n_{US}-1, n_{UI}+1)/N
\]
\[
 +n_{UI}n_{DS} \be_{UD} F(n_{DS}-1, n_{DI}+1)/N+n_{UI}n_{US} \be_{UU} F(n_{US}-1, n_{UI}+1)/N
\]
\[
+\la n_{DS} u_{DS} F(n_{DS}-1, n_{US}+1) + \la n_{US} u_{US} F(n_{US}-1, n_{DS}+1)
\]
\[
+\la n_{DI} u_{DI} F(n_{DI}-1, n_{UI}+1) + \la n_{UI} u_{UI} F(n_{UI}-1, n_{DI}+1),
\]
or in terms of $x$ as
\[
 L_NF(x_{DI}, x_{DS}, x_{UI}, x_{US})=Nx_{DS} q_{inf}^D v_H F(x-e_{DS}/N+e_{DI}/N)+Nx_{US} q_{inf}^U v_H F((x-e_{US}/N+e_{UI}/N)
 \]
 \[
 + Nx_{DI} q_{rec}^D F(x-e_{DI}/N+e_{DS}/N)+Nx_{UI} q_{rec}^U F(x-e_{UI}/N+e_{US}/N)
 \]
 \[
 +Nx_{DI}x_{DS} \be_{DD} F(x-e_{DS}/N+e_{DI}/N)+Nx_{DI}x_{US} \be_{DU} F(x-e_{US}/N+e_{UI}/N)
\]
\[
 +Nx_{UI}x_{DS} \be_{UD} F(x-e_{DS}/N+e_{DI}/N)+Nx_{UI}x_{US} \be_{UU} F(x-e_{US}/N+e_{UI}/N)
\]
\[
+N\la x_{DS} u_{DS} F(x-e_{DS}/N+e_{US}/N) + N\la x_{US} u_{US} F(x-e_{US}/N+e_{DS}/N)
\]
\begin{equation}
 \label{eqmainlimgenbotnet0}
+N\la x_{DI} u_{DI} F(x-e_{DI}/N+e_{UI}/N) + N\la x_{UI} u_{UI} F(x-e_{UI}/N+e_{DI}/N),
\end{equation}
where $\{e_j\}$ is the standard basis in $\R^4$.

If $F$ is a differentiable function, the generator $L_N$ turns to the generator
\[
 LF(x_{DI}, x_{DS}, x_{UI}, x_{US})=x_{DS} q_{inf}^D v_H \left(\frac{\pa F}{\pa x_{DI}}-\frac{\pa F}{\pa x_{DS}}\right)
 +x_{US} q_{inf}^U v_H \left(\frac{\pa F}{\pa x_{UI}}-\frac{\pa F}{\pa x_{US}}\right)
 \]
 \[
 + x_{DI} q_{rec}^D \left(\frac{\pa F}{\pa x_{DS}}-\frac{\pa F}{\pa x_{DI}}\right)
 +x_{UI} q_{rec}^U \left(\frac{\pa F}{\pa x_{US}}-\frac{\pa F}{\pa x_{UI}}\right)
 \]
 \[
 +x_{DI}x_{DS} \be_{DD} \left(\frac{\pa F}{\pa x_{DI}}-\frac{\pa F}{\pa x_{DS}}\right)
 +x_{DI}x_{US} \be_{DU} \left(\frac{\pa F}{\pa x_{UI}}-\frac{\pa F}{\pa x_{US}}\right)
\]
\[
 +x_{UI}x_{DS} \be_{UD} \left(\frac{\pa F}{\pa x_{DI}}-\frac{\pa F}{\pa x_{DS}}\right)
 +x_{UI}x_{US} \be_{UU} \left(\frac{\pa F}{\pa x_{UI}}-\frac{\pa F}{\pa x_{US}}\right)
\]
\[
+\la x_{DS} u_{DS} \left(\frac{\pa F}{\pa x_{US}}-\frac{\pa F}{\pa x_{DS}}\right)
 + \la x_{US} u_{US} \left(\frac{\pa F}{\pa x_{DS}}-\frac{\pa F}{\pa x_{US}}\right)
\]
\begin{equation}
 \label{eqmainlimgenbotnet}
+\la x_{DI} u_{DI} \left(\frac{\pa F}{\pa x_{UI}}-\frac{\pa F}{\pa x_{DI}}\right)
 + \la x_{UI} u_{UI} \left(\frac{\pa F}{\pa x_{DI}}-\frac{\pa F}{\pa x_{UI}}\right)
\end{equation}
in the limit $N\to \infty$.
This is a first order partial differential operator.
Its characteristics are given precisely by the ODE
\eqref{eqmainkineticbotnet}. A rigorous derivation showing the solutions
to \eqref{eqmainkineticbotnet} describe the limit of the Markov chain
generated by \eqref{eqmainlimgenbotnet0} can be found e.g. in \cite{Ko12}.   

We shall now use the
 Markov model above to assess the actions of individual players.

If $x(t)$ and $v_H(t)$ are given, the dynamics of each individual player is the Markov chain on $4$ states
with the generator
\begin{equation}
 \label{eqmainlimgenindbotnet}
\begin{aligned}
& L^{ind}g(DI)=\la u^{ind}(DI)(g(UI)-g(DI))+q^D_{rec}(g(DS)-g(DI)), \\
& L^{ind}g(DS)=\la u^{ind}(DS)(g(US)-g(DS))+q^D_{inf}v_H(g(DI)-g(DS)) \\
& \quad \quad \quad +x_{DI} \be_{DD} (g(DI)-g(DS))+x_{UI} \be_{UD}(g(DI)-g(DS)), \\
& L^{ind}g(UI)=\la u^{ind}(UI)(g(DI)-g(UI))+q^U_{rec}(g(US)-g(UI)), \\
& L^{ind}g(US)=\la u^{ind}(US)(g(DS)-g(US))+q^U_{inf}v_H(g(UI)-g(US)) \\
& \quad \quad \quad +x_{DI} \be_{DU} (g(UI)-g(US))+x_{UI} \be_{UU}(g(UI)-g(US))
\end{aligned}
\end{equation}
depending on the individual control $u^{ind}$.

Assuming that an individual pays a fee $k_D$ per unit of time for the defense system
and $k_I$ per unit time for losses resulting from being infected, her cost during a period of time $T$,
that she tries to minimize, is
\begin{equation}
 \label{eqcostcomputbotnet}
\int_0^T   (k_D \1_D+k_I \1_I) \, ds,
\end{equation}
where $\1_D$ (resp. $\1_I$) is the indicator function of the states $DI,DS$ (resp. of the states $DI$, $UI$).
Assuming that the Herder has to pay $k_H v_H$ per unit of time using efforts $v_H$ and receive the income
$f(x)$ depending on the distribution $x$ of the states of the computers, her payoff, that she tries to maximize, is
\begin{equation}
 \label{eqpayherder}
\int_0^T   (f_H(x)-k_Hv_H) \, ds.
\end{equation}

Therefore, starting with some control
\[
u^{com}=(u^{com}_t(DI), u^{com}_t(DS), u^{com}_t(UI), u^{com}_t(US))
\]
the Herder can find his optimal strategy $v_H(t)$ solving the deterministic optimal control problem with
dynamics \eqref{eqmainkineticbotnet} and payoff \eqref{eqpayherder} finding both optimal $v_H$
and the trajectory $x(t)$. Once $x(t)$ and $v_H(t)$ are known, each individual should solve
the Markov control problem \eqref{eqmainlimgenindbotnet} with costs \eqref{eqcostcomputbotnet}
thus finding the individual optimal strategy
\[
u^{ind}_t=(u^{ind}_t(DI), u^{ind}_t(DS), u^{ind}_t(UI), u^{ind}_t(US)).
\]

 The basic {\it MFG consistency equation} can now be explicitly written as
 \[
 u^{ind}_t=u^{com}_t.
 \]

Instead of analyzing this rather complicated dynamic problem, we shall look for a simpler
problem of consistent stationary strategies.

There are two standard stationary problems naturally linked with a dynamic one,
 one being the search for the average payoff
\[
g =\lim_{T\to \infty} \frac{1}{T}\int_0^T (k_D \1_D+k_I \1_I) \, dt
\]
for long period game, and another the search for discounted optimal payoff.
The first is governed by the solutions of HJB of the form $(T-t)\mu +g$, linear in $t$ (then $\mu$ describing the optimal average payoff),
so that $g$ satisfies the stationary HJB equation:
\begin{equation}
 \label{eqindstHJBbot}
\left\{\begin{aligned}
& \la \min_u u(g(UI)-g(DI))+q^D_{rec}(g(DS)-g(DI))+k_I+k_D=\mu, \\
& \la \min_u u(g(US)-g(DS))+q^D_{inf}v_H(g(DI)-g(DS)) \\
& \quad \quad \quad +x_{DI} \be_{DD} (g(DI)-g(DS))+x_{UI} \be_{UD}(g(DI)-g(DS))+k_D=\mu, \\
& \la \min_u u(g(DI)-g(UI))+q^U_{rec}(g(US)-g(UI))+k_I=\mu, \\
& \la \min_u u(g(DS)-g(US))+q^U_{inf}v_H(g(UI)-g(US)) \\
& \quad \quad \quad +x_{DI} \be_{DU} (g(UI)-g(US))+x_{UI} \be_{UU}(g(UI)-g(US))=\mu
\end{aligned}
\right.
 \end{equation}
 where $\min$ is over two values $\{0,1\}$. We shall denote $u=(u_{DI},u_{UI}, u_{DS}, u_{US})$
the argmax in this solution.

The discounted optimal payoff (with the discounting coefficient $\de$) satisfies the stationary HJB

\begin{equation}
 \label{eqindstHJBbotdisc}
\left\{\begin{aligned}
& \la \min_u u(g(UI)-g(DI)+q^D_{rec}(g(DS)-g(DI))+k_I+k_D=\de g(DI), \\
& \la \min_u u(g(US)-g(DS))+q^D_{inf}v_H(g(DI)-g(DS)) \\
& \quad \quad \quad +x_{DI} \be_{DD} (g(DI)-g(DS))+x_{UI} \be_{UD}(g(DI)-g(DS))+k_D=\de g(DS), \\
& \la \min_u u(g(DI)-g(UI))+q^U_{rec}(g(US)-g(UI))+k_I=\de g(UI), \\
& \la \min_u u(g(DS)-g(US))+q^U_{inf}v_H(g(UI)-g(US)) \\
& \quad \quad \quad +x_{DI} \be_{DU} (g(UI)-g(US))+x_{UI} \be_{UU}(g(UI)-g(US))=\de g(US)
\end{aligned}
\right.
 \end{equation}

The analysis of these two settings is mostly analogous. We shall concentrate on the first one.
Introducing the coefficients
\begin{equation}
 \label{eqdefalphabeta}
 \begin{aligned}
& \al = q^D_{inf}v_H +x_{DI} \be_{DD} +x_{UI} \be_{UD}, \\
& \be =q^U_{inf}v_H +x_{DI} \be_{DU}+x_{UI} \be_{UU},
\end{aligned}
\end{equation}
the stationary HJB equation \eqref{eqindstHJBbot} rewrites as

\begin{equation}
 \label{eqindstHJBbot1}
\left\{
\begin{aligned}
& \la \min(g(UI)-g(DI),0)+q^D_{rec}(g(DS)-g(DI))+k_I+k_D=\mu, \\
& \la \min(g(US)-g(DS),0)+\al (g(DI)-g(DS)) +k_D=\mu, \\
& \la \min(g(DI)-g(UI),0)+q^U_{rec}(g(US)-g(UI))+k_I=\mu, \\
& \la \min(g(DS)-g(US),0)+\be (g(UI)-g(US)) =\mu,
\end{aligned}
\right.
 \end{equation}
 where the choice of the first term as the infimum in these equations corresponds to the choice of control $u=1$.

The {\it stationary MFG consistency} problem is in finding $x=(x_{DI},x_{DS}, x_{UI}, x_{US})$
and $u=(u_{DI},u_{DS}, u_{UI}, u_{US})$, where $x$ is the stationary point of
evolution \eqref{eqmainkineticbotnet}, that is
\begin{equation}
 \label{eqindstfixedbot}
\left\{
\begin{aligned}
& x_{DS} \al -x_{DI} q_{rec}^D+\la (x_{UI}u_{UI}-x_{DI}u_{DI})=0 \\
& -x_{DS} \al +x_{DI} q_{rec}^D+\la (x_{US}u_{US}-x_{DS}u_{DS})=0 \\
& x_{US} \be -x_{UI} q_{rec}^U-\la (x_{UI}u_{UI}-x_{DI}u_{DI})=0 \\
& -x_{US} \be +x_{UI} q_{rec}^U-\la (x_{US}u_{US}-x_{DS}u_{DS})=0,
 \end{aligned}
 \right.
 \end{equation}
with $u=(u_{DI},u_{DS}, u_{UI}, u_{US})$ giving minimum
in the solution to \eqref{eqindstHJBbot} or \eqref{eqindstHJBbot1}. Thus $x$ is a fixed point of the
limiting dynamics of the distribution of large number of agents such that the corresponding
stationary control is individually optimal subject to this distribution. Yet in other words,
$x=(x_{DI},x_{DS}, x_{UI}, x_{US})$
and $u=(u_{DI},u_{DS}, u_{UI}, u_{US})$ solve \eqref{eqindstHJBbot}, \eqref{eqindstfixedbot} simultaneously.

Fixed points can practically model a stationary behavior only if they are stable. Thus we are interested
in {\it stable solutions} $(x,u)$ to the stationary MFG consistency problem \eqref{eqindstfixedbot},\eqref{eqindstHJBbot}, where
a solution is stable if the corresponding stationary distribution $x$ is a stable equilibrium to
\eqref{eqmainkineticbotnet} (with $u$ fixed by this solution).

Apart from stability, the fixed points can be classified via its efficiency.
Namely, let us say that a solution to the stationary MFG is {\it efficient} (or globally optimal)
if the corresponding average cost $\mu$ is minimal among all other solutions.

Talking about strategies, let us reduce the discussion to non-degenerate situations, where the minima in \eqref{eqindstHJBbot1}
are achieved on a single value of $u$ only.
In principle,
there are 16 possible pure stationary strategies (functions from the state space to $\{0,1\}$).
But not all of them can be realized as solutions to \eqref{eqindstHJBbot1}.
In fact if $u_{DI}=1$, then $g(UI)< g(DI)$ (can be equal in degenerate case) and thus $u_{UI}=0$.
This argument forbids all but four strategies as possible solutions to \eqref{eqindstHJBbot1},
namely
\begin{equation}
 \label{eq4strategiesallow}
\left\{
\begin{aligned}
& (i) \quad g(UI)\le g(DI), \quad g(US)\le g(DS) \Longleftrightarrow u_{UI}=u_{US}=0, \quad u_{DI}=u_{DS}=1, \\
& (ii) \quad g(UI)\ge g(DI), \quad g(US)\ge g(DS)  \Longleftrightarrow u_{DI}=u_{DS}=0, \quad u_{UI}=u_{US}=1, \\
& (iii) \quad  g(UI)\le g(DI), \quad g(US)\ge g(DS)  \Longleftrightarrow u_{UI}=u_{DS}=0, \quad  u_{DI}=u_{US}=1,\\
& (iv) \quad g(UI)\ge g(DI), \quad g(US)\le g(DS)  \Longleftrightarrow u_{DI}=u_{US}=0, \quad  u_{UI}=u_{DS}=1.
\end{aligned}
\right.
 \end{equation}

The first two strategies, either always choose $U$ or always choose $D$,
are acyclic, that is the corresponding Markov processes are acyclic in the sense that there does not exist
a cycle in a motion subject to these strategies. Other two strategies choose between $U$ and $D$ differently if infected or not.

Of course, allowing degenerate strategies, more possibilities arise.

%One can conjecture that under certain reasonable assumptions an efficient solution can be chosen to be acyclic.
%On the other hand, there does not seem to exist a priori reasons for acyclic strategies not to be among possible solutions,
%and they may well contribute to a kind of chaos or noise in the real evolution of the spread of infection.

To complete the model, let us observe that the natural assumptions on the parameters of the model
arising directly from their interpretation are as follows:

\begin{equation}
\label{eqassumonbotnetsim}
\left\{
\begin{aligned}
& q^D_{rec} \ge q^U_{rec}, \quad q^D_{inf} < q^U_{inf}, \\
& \be_{UD} \le \be_{UU}, \quad  \be_{DD} \le \be_{DU} , \\
& k_D \le k_I.
\end{aligned}
\right.
 \end{equation}

We shall always assume \eqref{eqassumonbotnetsim} hold.
Two additional natural simplifying assumptions that we shall use sometimes are the following:
the infection rate does not depend on the level of defense of
the computer transferring the infection, but only on the level of defence of the susceptible computer, that is,
instead of four coefficients $\be$ one has only 2 of them
\begin{equation}
\label{eqassumonbotnetsim1}
\be_U=\be_{DU}=\be_{UU}, \quad \be_D=\be_{UD}=\be_{DD},
\end{equation}
and the recovery rate do not depend on whether a computer is protected against the infection or not:
\begin{equation}
\label{eqassumonbotnetsim2}
q_{rec}=q^D_{rec}=q^U_{rec}.
\end{equation}

As we shall see, a convenient assumption, which is weaker than \eqref{eqassumonbotnetsim2}, turns out to be
\begin{equation}
\label{eqassumonbotnetsim3}
q^D_{rec}-q^U_{rec} < (q^U_{inf} -q^D_{inf})v_H.
\end{equation}

Finally, it is reasonable to assume that customers can switch rather quickly their regime of defence
(once they are willing to) meaning that we are effectively interested in the asymptotic regime of large $\la$.
As we shall show, in this regime the stationary MFG problem above can be completely solved analytically.
In this sense the present model is more complicated than a related mean-field game model of corruption
with three basic states developed in \cite{KolMalCorr},
where a transparent analytic classification of stable solutions is available already for arbitrary finite $\la$.

\section{Analysis of the stationary HJB equation}
\label{secHJB}

Let us start by solving HJB equation \eqref{eqindstHJBbot1}.

Consider strategy (i) of \eqref{eq4strategiesallow},
so that being unprotected is always optimal. Then \eqref{eqindstHJBbot1} becomes
\begin{equation}
 \label{eqindstHJBbotUnOp1}
\left\{
\begin{aligned}
& \la (g(UI)-g(DI))+q^D_{rec}(g(DS)-g(DI))+k_I+k_D=\mu, \\
& \la (g(US)-g(DS))+\al (g(DI)-g(DS)) +k_D=\mu, \\
& q^U_{rec}(g(US)-g(UI))+k_I=\mu, \\
& \be (g(UI)-g(US)) =\mu.
\end{aligned}
\right.
 \end{equation}
 As the solution $g$ is defined up to an additive constant we can set $g(US)=0$. Then
\eqref{eqindstHJBbotUnOp1} becomes
\begin{equation}
 \label{eqindstHJBbotUnOp2}
\left\{
\begin{aligned}
& \la (g(UI)-g(DI))+q^D_{rec}(g(DS)-g(DI))+k_I+k_D=\mu, \\
& -\la g(DS)+\al (g(DI)-g(DS)) +k_D=\mu, \\
& -q^U_{rec}g(UI)+k_I=\mu, \\
& \be g(UI) =\mu.
\end{aligned}
\right.
 \end{equation}

 From the third and fourth equations we find
 \begin{equation}
 \label{eqindstHJBbotUnOp2a}
 g(UI)=\frac{k_I}{\be +q^U_{rec}}, \quad \mu =\be g(UI)=\frac{\be k_I}{\be +q^U_{rec}}.
 \end{equation}
 Substituting these values in the first and second equations we obtain
 \begin{equation}
 \label{eqindstHJBbotUnOp3}
\left\{
\begin{aligned}
& g(DS)=\frac{k_D-\mu}{\la} +k_I \frac{\al (\be +\la +q^U_{rec})}{\la (\be +q^U_{rec})(\al +\la +q^D_{rec})}, \\
& g(DI)=\frac{k_D-\mu}{\la} +k_I \frac{(\al +\la) (\be +\la +q^U_{rec})}{\la (\be +q^U_{rec})(\al +\la +q^D_{rec})},
\end{aligned}
\right.
 \end{equation}
 and the conditions $g(UI)\le g(DI), g(US)=0\le g(DS)$ become
  \begin{equation}
 \label{eqindstHJBbotUnOp4}
\begin{aligned}
& k_D(\be +q_{rec}^U)(\al +\la + q^D_{rec}) \ge k_I [(\be+\la) q_{rec}^D -(\al+\la)q_{rec}^U], \\
& k_D(\be +q_{rec}^U)(\al +\la + q^D_{rec}) \ge k_I [\be (\la + q_{rec}^D) -\al (\la +q_{rec}^U)]
\end{aligned}
 \end{equation}
 respectively.

Consider strategy (ii) of \eqref{eq4strategiesallow},
so that being defended is optimal. Then \eqref{eqindstHJBbot1} becomes
\begin{equation}
 \label{eqindstHJBbotDeOp1}
\left\{
\begin{aligned}
& q^D_{rec}(g(DS)-g(DI))+k_I+k_D=\mu, \\
& \al (g(DI)-g(DS)) +k_D=\mu, \\
& \la (g(DI)-g(UI))+q^U_{rec}(g(US)-g(UI))+k_I=\mu, \\
& \la (g(DS)-g(US)) + \be (g(UI)-g(US)) =\mu.
\end{aligned}
\right.
 \end{equation}
 Setting $g(DS)=0$ yields
 \begin{equation}
 \label{eqindstHJBbotDeOp2}
\left\{
\begin{aligned}
& - q^D_{rec}g(DI))+k_I+k_D=\mu, \\
& \al g(DI) +k_D=\mu, \\
& \la (g(DI)-g(UI))+q^U_{rec}(g(US)-g(UI))+k_I=\mu, \\
& - \la g(US)) + \be (g(UI)-g(US)) =\mu.
\end{aligned}
\right.
 \end{equation}

 From the first and second equations we find
  \begin{equation}
 \label{eqindstHJBbotDeOp2a}
 g(DI)=\frac{k_I}{\al +q^D_{rec}}, \quad \mu =k_D+\al g(DI)=\frac{\al (k_D+k_I)+k_D q^D_{rec}}{\al +q^D_{rec}}.
 \end{equation}
 Substituting these values in the third and fourth equations we obtain
 \begin{equation}
 \label{eqindstHJBbotDeOp3}
\left\{
\begin{aligned}
& g(US)=-\frac{k_D}{\la} +k_I \frac{\be (\la +q^D_{rec})-\al (\la +q^U_{rec})}{\la (\al +q^D_{rec})(\be +\la +q^U_{rec})}, \\
& g(UI)=-\frac{k_D}{\la} +k_I \frac{(\be +\la)(\la +q^D_{rec})-\al q^U_{rec}}{\la (\al +q^D_{rec})(\be +\la +q^U_{rec})}.
\end{aligned}
\right.
\end{equation}
and the conditions $g(UI)\ge g(DI), \quad g(US)\ge g(DS)=0$ turn to
\begin{equation}
\label{eqindstHJBbotDeOp4}
\begin{aligned}
& k_D(\al +q_{rec}^D)(\be +\la + q^U_{rec}) \le k_I [(\be +\la ) q_{rec}^D -(\al+\la)q_{rec}^U], \\
& k_D(\al +q_{rec}^D)(\be +\la + q^U_{rec}) \le k_I [\be (\la + q_{rec}^D) -\al (\la +q_{rec}^U)]
\end{aligned}
\end{equation}
respectively.

Consider strategy (iii) of \eqref{eq4strategiesallow}.
Then \eqref{eqindstHJBbot1} becomes
\begin{equation}
 \label{eqindstHJBbotMix1Op1}
\left\{
\begin{aligned}
& \la (g(UI)-g(DI))+ q^D_{rec}(g(DS)-g(DI))+k_I+k_D=\mu, \\
& \al (g(DI)-g(DS)) +k_D=\mu, \\
& q^U_{rec}(g(US)-g(UI))+k_I=\mu, \\
& \la (g(DS)-g(US)) + \be (g(UI)-g(US)) =\mu.
\end{aligned}
\right.
 \end{equation}
 Setting $g(DS)=0$ yields
 \[
 \mu=\al g(DI) +k_D
 \]
 from the second equation, then
 \[
 \la g(UI) =g(DI)(\al +\la +q^D_{rec})-k_I
 \]
 from the first equation and
 \[
 g(US)=\frac{g(DI)}{\la q_{rec}^U}[\al \la +q^U_{rec} (\al +\la +q^D_{rec})]+\frac{\la k_D-(\la +q^U_{rec})k_I}{\la q_{rec}^U}
 \]
 from the third one. Plugging these expressions in the fourth equation of \eqref{eqindstHJBbotMix1Op1} we find
 (after many cancelations) $g(DI)$
 and then the other values of $g$:
 \begin{equation}
 \label{eqindstHJBbotMix1Op2}
\left\{
\begin{aligned}
& g(DI)=\frac{(\be +\la +q^U_{rec})(k_I-k_D)}{\al (\be +\la + q^U_{rec})+q^U_{rec} (\al +\la +q^D_{rec})}, \\
& g(US)=\frac{1}{\la}
\frac{k_I [\be(\la +q^D_{rec})-\al (\la +q^U_{rec})]-k_D (\be +q^U_{rec})(\al +\la +q^D_{rec})}
{\al (\be +\la + q^U_{rec})+q^U_{rec} (\al +\la +q^D_{rec})}, \\
&  g(UI)=\frac{1}{\la}
\frac{k_I [(\la +q^D_{rec})(\la +\be)-\al q^U_{rec}]-k_D (\be +\la + q^U_{rec})(\al +\la +q^D_{rec})}
{\al (\be +\la + q^U_{rec})+q^U_{rec} (\al +\la +q^D_{rec})}
\end{aligned}
\right.
 \end{equation}
 Hence
 \[
 \mu=\frac{k_I \al (\be + \la +q^U_{rec})+k_D q^U_{rec}(\al +\la + q^D_{rec})}
{\al (\be +\la + q^U_{rec})+q^U_{rec} (\al +\la +q^D_{rec})}.
\]
The conditions $g(UI)\le g(DI), \quad g(US)\ge g(DS)$ rewrite as
\begin{equation}
 \label{eqindstHJBbotMix1Op3}
\left\{
\begin{aligned}
& k_D (\al+q^D_{rec})(\be +\la +q^U_{rec}) \ge k_I [(\be +\la ) q_{rec}^D -(\al+\la)q_{rec}^U], \\
&  k_D (\al+\la + q^D_{rec})(\be +q^U_{rec}) \le k_I [\be (\la + q_{rec}^D) -\al (\la +q_{rec}^U)].
\end{aligned}
\right.
 \end{equation}

Consider strategy (iv) of \eqref{eq4strategiesallow}.
Then \eqref{eqindstHJBbot1} becomes
\begin{equation}
 \label{eqindstHJBbotMix2Op1}
\left\{
\begin{aligned}
& q^D_{rec}(g(DS)-g(DI))+k_I+k_D=\mu, \\
& \la (g(US)-g(DS)) + \al (g(DI)-g(DS)) +k_D=\mu, \\
& \la (g(DI)-g(UI)) + q^U_{rec}(g(US)-g(UI))+k_I=\mu, \\
& \be (g(UI)-g(US)) =\mu.
\end{aligned}
\right.
 \end{equation}
 Setting $g(US)=0$ yields $\mu=\be g(UI)$ from the fourth equation, then
 \[
 \la g(DI)=g(UI)(\be +\la +q^U_{rec})-k_I
 \]
 from the third equation and
 \[
 \la q^D_{rec} g(DS)=g(UI)[\be \la +q^D_{rec}(\be +\la +q^U_{rec})]-k_D\la -k_I(\la +q^D_{rec})
 \]
 from the first one.
  Plugging these expressions in the second equation of \eqref{eqindstHJBbotMix2Op1} we find $g(UI)$ and then the other values of $g$:
  \begin{equation}
 \label{eqindstHJBbotMix2Op2}
\left\{
\begin{aligned}
& g(UI)= \frac{(k_D+k_I) (\al +\la +q^D_{rec})}
 {\be (\al +\la + q^D_{rec})+q^D_{rec} (\be +\la +q^U_{rec})}, \\
& g(DS)=\frac{1}{\la}
\frac{k_D (\be +\la + q^U_{rec})(\al +q^D_{rec})+ k_I [\al(\la +q^U_{rec})-\be (\la +q^D_{rec})]}
{\be (\al +\la + q^D_{rec})+q^D_{rec} (\be +\la +q^U_{rec})}, \\
&  g(DI)=\frac{1}{\la}
\frac{k_D (\be +\la + q^U_{rec})(\al +\la + q^D_{rec})+k_I [(\al +\la)(\la +q^U_{rec})-\be q^D_{rec}]}
{\be (\al +\la + q^D_{rec})+q^D_{rec} (\be +\la +q^U_{rec})}.
\end{aligned}
\right.
 \end{equation}
 Hence the conditions $g(UI)\ge g(DI), \quad g(DS)\ge g(US)=0$ rewrite as
\begin{equation}
 \label{eqindstHJBbotMix2Op3}
\left\{
\begin{aligned}
& k_D (\al+\la +q^D_{rec})(\be +q^U_{rec}) \le k_I [(\be +\la ) q_{rec}^D -(\al+\la)q_{rec}^U], \\
&  k_D (\al + q^D_{rec})(\be +\la + q^U_{rec}) \ge k_I [\be (\la + q_{rec}^D) -\al (\la +q_{rec}^U)].
\end{aligned}
\right.
 \end{equation}

We are now interested in finding out how many solutions equation \eqref{eqindstHJBbot1} may have for a given $x$.
The first observation in this direction is that the interior of the domain defined by
\eqref{eqindstHJBbotUnOp4} (that is, with a solution of case (i)) and the interior of the domain defined by
\eqref{eqindstHJBbotMix1Op3} (that is, with a solution of case (iii)) do not intersect, because the first inequality
in \eqref{eqindstHJBbotUnOp4} contradicts the second inequality in \eqref{eqindstHJBbotMix1Op3} (apart from the boundary).
Similarly,  the interior of the domain defined by
\eqref{eqindstHJBbotUnOp4} (that is with a solution of case (i)) and the interior of the domain defined by
\eqref{eqindstHJBbotMix2Op3} (that is, with a solution of case (iv)) do not intersect,
and  the interior of the domain defined by
\eqref{eqindstHJBbotDeOp4} (that is, with a solution of case (ii)) does not intersect with the domains having solutions in
cases (iii) or (iv).

Next we find that one can distinguish two natural domains of $x$ classifying the solutions to  HJB equation \eqref{eqindstHJBbot1}:
\[
D_1=\{x : \be+q^U_{rec} > \al+q^D_{rec} \}, \quad D_2=\{x : \be+q^U_{rec} < \al+q^D_{rec} \}.
\]
More explicitly,
\[
D_1=\{x: x_{DI} (\be_{DU}-\be_{DD}) + x_{UI} (\be_{UU}-\be_{UD}) > (q^D_{inf}-q^U_{inf}) v_H +q^D_{rec}-q^U_{rec} \}.
\]

By \eqref{eqassumonbotnetsim} it is seen that under a natural additional simplifying assumptions
\eqref{eqassumonbotnetsim2} or even \eqref{eqassumonbotnetsim3},
all positive $x$ belong to $D_1$ (or its boundary), so that $D_2$ is empty.

Under additional assumption \eqref{eqassumonbotnetsim1} the condition $x\in D_1$ gets a simpler form
\begin{equation}
\label{eqcondD1sim}
x > \bar x =\frac{(q^D_{inf}-q^U_{inf}) v_H +q^D_{rec}-q^U_{rec}}{\be_U-\be_D}.
\end{equation}

To link with the conditions for cases (i)-(iv) one observes the following equivalent forms of the main condition
of being in $D_1$:

\[
\be+q^U_{rec} > \al+q^D_{rec} \Longleftrightarrow
(\be+q^U_{rec})(\al+q^D_{rec}+\la) > (\al+q^D_{rec})(\be+q^U_{rec}+\la)
\]
\begin{equation}
\label{eqcondD1}
\Longleftrightarrow \be (\la + q_{rec}^D) -\al (\la +q_{rec}^U)  > (\be +\la ) q_{rec}^D -(\al+\la)q_{rec}^U.
\end{equation}

From here it is seen that if $x$ belongs simultaneously to the interiors of the domains specified by
\eqref{eqindstHJBbotUnOp4} and \eqref{eqindstHJBbotDeOp4} (that is, with solutions in cases (i) and (ii) simultaneously),
then necessarily $x\in D_1$ (that is, for $x\in D_2$ the conditions specifying cases (i) and (ii) are incompatible).
On the other hand, if  $x$ belongs simultaneously to the interiors of the domains specified by
\eqref{eqindstHJBbotMix1Op3} and \eqref{eqindstHJBbotMix2Op3} (that is, with solutions in cases (iii) and (iv) simultaneously),
then necessarily $x\in D_2$ (that is, for $x\in D_1$ the conditions specifying cases (iii) and (iv) are incompatible).

Denoting $\ka =k_D/k_i$, we can summarize the properties of HJB equation \eqref{eqindstHJBbot1} as follows
(uniqueness is always understood up to the shifts in $g$).

\begin{prop}
\label{propeqHJBbotsim}
Suppose $x\in D_1$.

(1) If
\begin{equation}
\label{eq1propeqHJBbotsim}
\frac{(\be +\la ) q_{rec}^D -(\al+\la)q_{rec}^U}{(\be+q^U_{rec}+\la)(\al+q^D_{rec})}  < \ka
< \frac{\be (\la + q_{rec}^D) -\al (\la +q_{rec}^U)}{(\be+q^U_{rec})(\al+q^D_{rec}+\la)},
\end{equation}
then there exists a unique solution to \eqref{eqindstHJBbot1} belonging to case (iii) and there are no other solutions to \eqref{eqindstHJBbot1}.

(2) If
\begin{equation}
\label{eq2propeqHJBbotsim}
\frac{\be (\la + q_{rec}^D) -\al (\la +q_{rec}^U)}{(\be+q^U_{rec}+\la)(\al+q^D_{rec})}  < \ka
< \frac{(\be +\la ) q_{rec}^D -(\al+\la)q_{rec}^U}{(\be+q^U_{rec})(\al+q^D_{rec}+\la)},
\end{equation}
then there exists a unique solution to \eqref{eqindstHJBbot1} belonging to case (iv) and there are no other solutions to \eqref{eqindstHJBbot1}.

(3) A solution belonging to case (i) exists if and only if
\begin{equation}
\label{eq3propeqHJBbotsim}
 \ka \ge  \frac{\be (\la + q_{rec}^D) -\al (\la +q_{rec}^U)}{(\be+q^U_{rec})(\al+q^D_{rec}+\la)},
\end{equation}
and is unique if this holds.
A solution belonging to case (ii) exists if and only if
\begin{equation}
\label{eq4propeqHJBbotsim}
 \ka \le \frac{(\be +\la ) q_{rec}^D -(\al+\la)q_{rec}^U}{(\be+q^U_{rec}+\la)(\al+q^D_{rec})},
\end{equation}
and is unique if this holds. Either of conditions \eqref{eq3propeqHJBbotsim} or \eqref{eq4propeqHJBbotsim} is incompatible
with either \eqref{eq1propeqHJBbotsim} or \eqref{eq2propeqHJBbotsim}.
In particular, equation \eqref{eqindstHJBbot1} may have at most two solutions (if both \eqref{eq3propeqHJBbotsim} and \eqref{eq4propeqHJBbotsim} hold).

(4) Under \eqref{eqassumonbotnetsim2}, one has always
\begin{equation}
\label{eq6propeqHJBbotsim}
\frac{\be (\la + q_{rec}^D) -\al (\la +q_{rec}^U)}{(\be+q^U_{rec}+\la)(\al+q^D_{rec})}
\ge  \frac{(\be +\la ) q_{rec}^D -(\al+\la)q_{rec}^U}{(\be+q^U_{rec})(\al+q^D_{rec}+\la)},
\end{equation}
and
\begin{equation}
\label{eq5propeqHJBbotsim}
\frac{(\be +\la ) q_{rec}^D -(\al+\la)q_{rec}^U}{(\be+q^U_{rec}+\la)(\al+q^D_{rec})}
\le \frac{\be (\la + q_{rec}^D) -\al (\la +q_{rec}^U)}{(\be+q^U_{rec})(\al+q^D_{rec}+\la)}.
\end{equation}
Hence \eqref{eq2propeqHJBbotsim} becomes impossible and conditions \eqref{eq3propeqHJBbotsim}  and \eqref{eq4propeqHJBbotsim}
become incompatible implying the uniqueness of the solution to \eqref{eqindstHJBbot1} for any $x\in D_1$.
This unique solution belongs to
cases (ii), (iii) and (i) respectively for $\ka$ satisfying \eqref{eq4propeqHJBbotsim}, \eqref{eq1propeqHJBbotsim},
\eqref{eq3propeqHJBbotsim} (when equality holds in \eqref{eq3propeqHJBbotsim} or \eqref{eq4propeqHJBbotsim}, two solutions
from different cases become coinciding).
\end{prop}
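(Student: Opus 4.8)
The plan is to treat the statement as a repackaging of the explicit case analysis already carried out, reorganised as a comparison of four affine constraints on the single ratio $\ka=k_D/k_I$. \emph{First,} I would record what is already available: by the discussion following \eqref{eq4strategiesallow}, every solution $g$ of \eqref{eqindstHJBbot1} realises one of the four sign patterns (i)--(iv), a degenerate solution ($g(UI)=g(DI)$ or $g(US)=g(DS)$) sitting on the common boundary of two of them; and in each case \eqref{eqindstHJBbot1} is a linear system whose unique solution (up to an additive constant) was computed in \eqref{eqindstHJBbotUnOp2a}--\eqref{eqindstHJBbotUnOp3}, \eqref{eqindstHJBbotDeOp2a}--\eqref{eqindstHJBbotDeOp3}, \eqref{eqindstHJBbotMix1Op2} and \eqref{eqindstHJBbotMix2Op2}, with strictly positive denominators under \eqref{eqassumonbotnetsim}. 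That candidate is an honest solution of \eqref{eqindstHJBbot1} precisely when the attached inequalities \eqref{eqindstHJBbotUnOp4}, \eqref{eqindstHJBbotDeOp4}, \eqref{eqindstHJBbotMix1Op3}, \eqref{eqindstHJBbotMix2Op3} hold, so the whole proposition comes down to deciding which of these four inequality pairs is satisfied for a given $x$ and $\ka$.

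\emph{Next,} I would normalise and exploit the geometry of $D_1$. Abbreviate $M_1=(\be+\la)q^D_{rec}-(\al+\la)q^U_{rec}$, $M_2=\be(\la+q^D_{rec})-\al(\la+q^U_{rec})$, $P_+=(\be+q^U_{rec})(\al+\la+q^D_{rec})$, $P_-=(\al+q^D_{rec})(\be+\la+q^U_{rec})$. Dividing each of \eqref{eqindstHJBbotUnOp4}--\eqref{eqindstHJBbotMix2Op3} by $k_I$ and by the relevant positive product of rates, the four conditions become: (i) $\ka\ge M_1/P_+$ and $\ka\ge M_2/P_+$; (ii) $\ka\le M_1/P_-$ and $\ka\le M_2/P_-$; (iii) $M_1/P_-\le\ka\le M_2/P_+$; (iv) $M_2/P_-\le\ka\le M_1/P_+$. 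The two elementary identities $M_2-M_1=\la[(\be+q^U_{rec})-(\al+q^D_{rec})]=P_+-P_-$ show that $x\in D_1$ is exactly $M_2>M_1$ (equivalently $P_+>P_-$), which at once collapses (i) to $\ka\ge M_2/P_+$, that is \eqref{eq3propeqHJBbotsim}, and (ii) to $\ka\le M_1/P_-$, that is \eqref{eq4propeqHJBbotsim}, while (iii) and (iv) are already \eqref{eq1propeqHJBbotsim} and \eqref{eq2propeqHJBbotsim}. Using the same identities one verifies $M_2P_--M_1P_+=(M_2-M_1)(P_--M_1)$ and $M_2P_+-M_1P_-=(M_2-M_1)(M_1+P_+)$; since $P_--M_1$ and $M_1+P_+$ expand into polynomials in the nonnegative rates with nonnegative coefficients, and $M_1\ge0$ on $D_1$ (because \eqref{eqassumonbotnetsim} gives $q^D_{rec}\ge q^U_{rec}$, whence $D_1$ forces $\be>\al$), one obtains on $D_1$ the ordering
\[
\frac{M_1}{P_+}\ \le\ \frac{M_1}{P_-}\ \le\ \frac{M_2}{P_+}\ \le\ \frac{M_2}{P_-},
\]
where a negative threshold is simply overridden by the constraint $\ka>0$.

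\emph{Then} the four assertions fall out. From the ordering, the three ranges $\{\ka\le M_1/P_-\}$, $[M_1/P_-,M_2/P_+]$, $\{\ka\ge M_2/P_+\}$ cover $\ka>0$ and overlap only at the two endpoints, while the case-(iv) range $[M_2/P_-,M_1/P_+]$ is empty. Hence (1): if \eqref{eq1propeqHJBbotsim} holds, $\ka$ is strictly interior to the case-(iii) range and outside every other, giving the unique case-(iii) solution and no further one. (2): \eqref{eq2propeqHJBbotsim} cannot hold, its range being empty, and is recorded only for symmetry with the later analysis of $D_2$. (3): cases (i), (ii) have solutions iff \eqref{eq3propeqHJBbotsim}, \eqref{eq4propeqHJBbotsim} respectively; the ordering makes each of these incompatible with \eqref{eq1propeqHJBbotsim} and with \eqref{eq2propeqHJBbotsim}, and since among all pairs of the four cases only $\{(\mathrm i),(\mathrm{ii})\}$ was not already excluded by the intersection computations preceding the proposition, \eqref{eqindstHJBbot1} has at most two solutions; at $\ka=M_1/P_-$ and $\ka=M_2/P_+$ two of the explicit formulas coincide, by continuity on the boundary of the respective domains. (4): under \eqref{eqassumonbotnetsim2} one has $M_1=q_{rec}(\be-\al)$ and $M_2=(\la+q_{rec})(\be-\al)$, so $M_2=M_1(\la+q_{rec})/q_{rec}$; plugged into the two product identities above this makes \eqref{eq5propeqHJBbotsim} and \eqref{eq6propeqHJBbotsim} transparent, so \eqref{eq2propeqHJBbotsim} is impossible and \eqref{eq3propeqHJBbotsim}, \eqref{eq4propeqHJBbotsim} become incompatible, and the three surviving ranges partition $\ka>0$, producing the unique solution in case (ii), (iii) or (i) according as \eqref{eq4propeqHJBbotsim}, \eqref{eq1propeqHJBbotsim}, \eqref{eq3propeqHJBbotsim} holds, with two coinciding solutions at the two endpoints.

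\emph{The one genuinely delicate point} is the normalisation step above: recognising that ``$x\in D_1$'' is precisely the pair of sign statements $M_2>M_1$ and $P_+>P_-$, and then squeezing out the full four-term ordering of the threshold ratios while keeping careful track of the (possibly negative) signs of $M_1,M_2$, so that no inequality is cross-multiplied illegitimately. Everything else is bookkeeping, and once this ordering is available assertions (1)--(4) are immediate.
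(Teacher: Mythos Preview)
Your argument is correct and in fact cleaner than the paper's own proof. The paper proves parts (1)--(3) by simply pointing back to the explicit case-by-case computations preceding the proposition (the formulas \eqref{eqindstHJBbotUnOp4}, \eqref{eqindstHJBbotDeOp4}, \eqref{eqindstHJBbotMix1Op3}, \eqref{eqindstHJBbotMix2Op3} and the pairwise incompatibilities noted just before the statement). For part (4) it cross-multiplies \eqref{eq5propeqHJBbotsim} and \eqref{eq6propeqHJBbotsim} under the assumption $q^D_{rec}=q^U_{rec}=q_{rec}$ and reduces them to
\[
q_{rec}(\be-\al)^2-(\be-\al)(\be+\la+q_{rec})(\al+q_{rec})\le 0,
\qquad
q_{rec}(\be-\al)^2+(\be-\al)(\be+q_{rec})(\al+\la+q_{rec})\ge 0,
\]
which are declared obvious on $D_1$.

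Your route is genuinely different. By introducing $M_1,M_2,P_+,P_-$ and observing the single identity $P_+-P_-=M_2-M_1$, you replace the four separate boundary checks by the two factorisations
\[
M_2P_--M_1P_+=(M_2-M_1)(P_--M_1),\qquad M_2P_+-M_1P_-=(M_2-M_1)(P_-+M_2),
\]
and read off the full ordering of the four threshold ratios at once. What this buys you is more than the proposition asserts: your expansion $P_--M_1=\al\be+\al\la+2\al q^U_{rec}+q^D_{rec}q^U_{rec}+\la q^U_{rec}\ge 0$ uses only the standing assumption \eqref{eqassumonbotnetsim}, not \eqref{eqassumonbotnetsim2}. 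Hence \eqref{eq5propeqHJBbotsim} and \eqref{eq6propeqHJBbotsim} already hold on all of $D_1$, statement (2) is vacuous there, and the ``at most two solutions'' in (3) is in fact ``at most one''. The paper only claims this under \eqref{eqassumonbotnetsim2}, and indeed the subsequent Remark~\ref{remarkonvarioussol}(1) and the subdomain $D_{12}$ in Proposition~\ref{propHJBbotsimlargela} are phrased as if coexistence of cases (i) and (ii) were possible on $D_1$; your computation shows that under \eqref{eqassumonbotnetsim} it is not.
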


\begin{proof}
Statements (1)-(3) follow from the arguments given above. (iv) Under \eqref{eqassumonbotnetsim2}, conditions \eqref{eq5propeqHJBbotsim} and \eqref{eq6propeqHJBbotsim} rewrite as
\[
q_{rec}(\be-\al)^2-(\be-\al) (\be+\la +q_{rec})(\al +q_{rec}) \le 0
\]
and
 \[
 q_{rec} (\be-\al)^2 +(\be-\al) (\be+q_{rec})(\al +\la +q_{rec}) \ge 0,
\]
which obviously hold.
\end{proof}

\begin{remark}
\label{remarkonvarioussol}
(1) When \eqref{eqassumonbotnetsim2} does not hold one can find situations when solutions from cases (i) and (ii) exist simultaneously.
To get simple examples one can assume $\ka =1$. (2) When two solutions exist simultaneously one can discriminate them by the values
of the average payoff $\mu$. One sees from  \eqref{eqindstHJBbotUnOp2a} and \eqref{eqindstHJBbotDeOp2a}, that
$\mu$ arising from cases (i) and (ii) are different (apart from a single value of $\ka$).
(3) The uniqueness result under \eqref{eqassumonbotnetsim2} is quite remarkable, as it does not seem to follow a priori
from any intuitive arguments.
\end{remark}

Again directly from the argument above one can conclude the following.
\begin{prop}
\label{propeqHJBbotsimD2}
Suppose $x\in D_2$.

(1) If
\begin{equation}
\label{eq1propeqHJBbotsimD2}
\ka > \frac{(\be +\la ) q_{rec}^D -(\al+\la)q_{rec}^U}{(\be+q^U_{rec})(\al+q^D_{rec}+\la)},
\end{equation}
then there exists a unique solution to \eqref{eqindstHJBbot1} belonging to case (i) and there are no other solutions to \eqref{eqindstHJBbot1}.

(2) If
\begin{equation}
\label{eq2propeqHJBbotsimD2}
\ka < \frac{\be (\la + q_{rec}^D) -\al (\la +q_{rec}^U)}{(\be+q^U_{rec}+\la)(\al+q^D_{rec})},
\end{equation}
then there exists a unique solution to \eqref{eqindstHJBbot1} belonging to case (ii) and there are no other solutions to \eqref{eqindstHJBbot1}.

(3) A solution belonging to case (iii) exists if and only if
\begin{equation}
\label{eq3propeqHJBbotsimD2}
\frac{(\be +\la ) q_{rec}^D -(\al+\la)q_{rec}^U}{(\be+q^U_{rec}+\la)(\al+q^D_{rec})}
\le \ka \le  \frac{\be (\la + q_{rec}^D) -\al (\la +q_{rec}^U)}{(\be+q^U_{rec})(\al+q^D_{rec}+\la)},
\end{equation}
and is unique if this holds.
A solution belonging to case (iv) exists if and only if
\begin{equation}
\label{eq4propeqHJBbotsimD2}
\frac{\be (\la + q_{rec}^D) -\al (\la +q_{rec}^U)}{(\be+q^U_{rec}+\la)(\al+q^D_{rec})}
 \le \ka \le \frac{(\be +\la ) q_{rec}^D -(\al+\la)q_{rec}^U}{(\be+q^U_{rec})(\al+q^D_{rec}+\la)},
\end{equation}
and is unique if this holds. Either of conditions \eqref{eq3propeqHJBbotsimD2} or \eqref{eq4propeqHJBbotsimD2} is incompatible
with either \eqref{eq1propeqHJBbotsimD2} or \eqref{eq2propeqHJBbotsimD2}.
In particular, equation \eqref{eqindstHJBbot1} may have at most two solutions
(if \eqref{eq3propeqHJBbotsimD2} and \eqref{eq4propeqHJBbotsimD2} hold simultaneously).
\end{prop}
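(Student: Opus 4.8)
The plan is to imitate verbatim the scheme behind Proposition \ref{propeqHJBbotsim}, interchanging the roles of the acyclic strategies (i), (ii) with those of the mixed strategies (iii), (iv); this interchange is forced because the inequality defining $D_2$ is the exact reverse of the one defining $D_1$.

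First I would recall the bookkeeping already carried out in Section \ref{secHJB}: for each admissible strategy (i)--(iv) the stationary HJB system \eqref{eqindstHJBbot1} has a solution of that type exactly when the corresponding pair of inequalities -- \eqref{eqindstHJBbotUnOp4}, \eqref{eqindstHJBbotDeOp4}, \eqref{eqindstHJBbotMix1Op3} or \eqref{eqindstHJBbotMix2Op3} -- holds, and that this solution is then unique up to an additive constant, being given by the explicit formulas \eqref{eqindstHJBbotUnOp2a}--\eqref{eqindstHJBbotUnOp3}, \eqref{eqindstHJBbotDeOp2a}--\eqref{eqindstHJBbotDeOp3}, \eqref{eqindstHJBbotMix1Op2}, \eqref{eqindstHJBbotMix2Op2}. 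Dividing each of these inequalities by the relevant strictly positive product $(\be+q^U_{rec})(\al+\la+q^D_{rec})$ or $(\al+q^D_{rec})(\be+\la+q^U_{rec})$ recasts every feasibility requirement as a bound on $\ka=k_D/k_I$ built from the two numerators $(\be+\la)q^D_{rec}-(\al+\la)q^U_{rec}$ and $\be(\la+q^D_{rec})-\al(\la+q^U_{rec})$.

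Next I would feed in the definition of $D_2$ through \eqref{eqcondD1}, which on $D_2$ reads $\be(\la+q^D_{rec})-\al(\la+q^U_{rec})<(\be+\la)q^D_{rec}-(\al+\la)q^U_{rec}$. Consequently, in the pair \eqref{eqindstHJBbotUnOp4} the second inequality is implied by the first, so case (i) is realizable precisely under \eqref{eq1propeqHJBbotsimD2} (up to the harmless non-strictness at the boundary), and symmetrically in \eqref{eqindstHJBbotDeOp4} the first inequality is implied by the second, so case (ii) is realizable precisely under \eqref{eq2propeqHJBbotsimD2}; the conditions for cases (iii) and (iv) are already two-sided and become \eqref{eq3propeqHJBbotsimD2} and \eqref{eq4propeqHJBbotsimD2}. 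For the ``no other solutions'' and ``at most two'' claims I would combine the two facts recorded before Proposition \ref{propeqHJBbotsim}: (a) the interiors of the domains for (i)\&(iii), (i)\&(iv), (ii)\&(iii), (ii)\&(iv) are disjoint for every $x$, and (b) on $D_2$ the domains of (i) and (ii) are incompatible, whereas it is (iii) and (iv) whose domains may overlap on $D_2$. Inspecting the resulting ordering of the four $\ka$-thresholds on $D_2$ then shows that any two simultaneously admissible strategies must be exactly (iii) and (iv), which happens iff \eqref{eq3propeqHJBbotsimD2} and \eqref{eq4propeqHJBbotsimD2} both hold.

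I do not expect a real obstacle: the proof is essentially a transcription of the $D_1$ argument. The one point demanding attention is getting the direction of the $D_2$ inequality right, so that the redundant member of each of the pairs \eqref{eqindstHJBbotUnOp4} and \eqref{eqindstHJBbotDeOp4} is the correct one -- this is precisely where the $D_2$ statement parts company with the $D_1$ statement -- and, as in Proposition \ref{propeqHJBbotsim}, treating the boundary values of $\ka$, where solutions from adjacent cases coincide, as degenerate and not to be double-counted.
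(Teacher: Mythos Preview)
Your proposal is correct and is precisely the argument the paper has in mind: the paper's own proof is the single sentence ``Again directly from the argument above one can conclude the following'', and you have simply unpacked that sentence, using \eqref{eqcondD1} on $D_2$ to identify the binding inequality in each of the pairs \eqref{eqindstHJBbotUnOp4}, \eqref{eqindstHJBbotDeOp4} and then invoking the pairwise incompatibilities already recorded before Proposition~\ref{propeqHJBbotsim}. There is nothing to add.
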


Essential simplifications that allow eventually for a full classification of the stationary MFG consistency problem
occur in the limit of large $\la$.
For a precise formulation in case
\begin{equation}
\label{eqassumonbotnetsim4}
\de= q^D_{rec}-q^U_{rec} >0
\end{equation}
one needs further decomposition of the domains $D_1,D_2$. Namely,
for $j=1,2$, let
\[
D_{j1}=\{x\in D_j : \frac{\de}{\al +q^D_{rec}} < \frac{\be-\al} {\be +q^U_{rec}} \}.
\]

\begin{prop}
\label{propHJBbotsimlargela}
The following hold for large $\la$ outside
an interval of $\ka$ of size of order $\la^{-1}$:

(1) Under \eqref{eqassumonbotnetsim2}
conditions \eqref{eq4propeqHJBbotsim}, \eqref{eq1propeqHJBbotsim},
\eqref{eq3propeqHJBbotsim} classifying the solutions to the HJB equation rewrite as
\begin{equation}
\label{eq1propHJBbotsimlargela}
\ka \le 0, \quad 0 < \ka < \frac{(\be-\al)}{\be +q},
\quad \ka \ge \frac{(\be-\al)}{\be +q},
\end{equation}
respectively. In particular, solutions of case (ii) become impossible.

(2) Suppose $x\in D_1$ and \eqref{eqassumonbotnetsim4} holds.
If $x\in D_{11}$, there exists a unique solution to \eqref{eqindstHJBbot1}, which belongs to cases (ii), (iii), (i) for
\begin{equation}
\label{eq2propHJBbotsimlargela}
\ka < \frac{\de}{\al +q^D_{rec}}, \quad \frac{\de}{\al +q^D_{rec}} <\ka < \frac{\be-\al} {\be +q^U_{rec}},
\quad \ka > \frac{\be-\al} {\be +q^U_{rec}},
\end{equation}
respectively.
If $x\in D_{12}$, solutions from case (iii) do not exist and there exist two solutions
to \eqref{eqindstHJBbot1} for
\begin{equation}
\label{eq3propHJBbotsimlargela}
\frac{\be-\al} {\be +q^U_{rec}} < \ka < \frac{\de}{\al +q^D_{rec}},
\end{equation}
 belonging to cases (i) and (ii),
and only one solution otherwise.

(3) Suppose $x \in D_2$.
If $x\in D_{22}$, solutions from case (iii) do not exist and there is always a unique solution
to \eqref{eqindstHJBbot1} belonging to case (ii), (iv) or (i), for
\begin{equation}
\label{eq4propHJBbotsimlargela}
\ka < \frac{\be-\al} {\al +q^D_{rec}}, \quad \frac{\be-\al} {\al +q^D_{rec}} < \ka < \frac{\de}{\be +q^U_{rec}},
\quad \ka >  \frac{\de}{\be +q^U_{rec}},
\end{equation}
respectively.
If $x\in D_{21}$, then there are two solutions to \eqref{eqindstHJBbot1} for
\begin{equation}
\label{eq5propHJBbotsimlargela}
\frac{\de}{\al +q^D_{rec}} < \ka < \frac{\be-\al} {\be +q^U_{rec}},
\end{equation}
which belong to cases (iii) and (iv),
and one solution otherwise. This unique solution belongs to case (ii) or (i) for
\[
\ka <  \frac{\be-\al} {\al +q^D_{rec}}, \quad \ka > \frac{\de}{\be +q^U_{rec}}
\]
respectively and to case (iv) otherwise.
\end{prop}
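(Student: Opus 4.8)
\medskip
\noindent\textbf{Proof strategy.}
The plan is to derive all of \eqref{eq1propHJBbotsimlargela}--\eqref{eq5propHJBbotsimlargela} by one mechanism: letting $\la\to\infty$ in the $\ka$-thresholds already produced in Propositions \ref{propeqHJBbotsim} and \ref{propeqHJBbotsimD2}. Each such threshold is a ratio $(\la A+B)/(\la C+D)$ in which $C$ is one of $\al+q^D_{rec}$ or $\be+q^U_{rec}$, hence strictly positive, so it converges to $A/C$ with an error of order $\la^{-1}$, uniformly as $x$ ranges over a compact subset of the interior of the domain at hand (there $\al,\be$ and the recovery rates stay bounded). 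First I would record the four limits that appear: the ``lower'' thresholds --- the lower bound in the case~(ii) condition and the left endpoint of the case~(iii) interval --- tend to $\de/(\al+q^D_{rec})$, which equals $0$ when $\de=0$; the ``upper'' thresholds --- the upper bound in the case~(i) condition and the right endpoint of the case~(iii) interval --- tend to $(\be-\al)/(\be+q^U_{rec})$; and the two thresholds bracketing the case~(iv) interval tend to $(\be-\al)/(\al+q^D_{rec})$ and to $\de/(\be+q^U_{rec})$. The $O(\la^{-1})$ rate is exactly what gives the qualifier ``outside an interval of $\ka$ of size of order $\la^{-1}$'': once $\ka$ stays away from $O(\la^{-1})$-neighbourhoods of the limiting thresholds, the finite-$\la$ inequalities of Propositions \ref{propeqHJBbotsim}, \ref{propeqHJBbotsimD2} become equivalent to those inequalities with the limiting values inserted. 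Part~(1) is then immediate: with $\de=0$ by \eqref{eqassumonbotnetsim2} the lower thresholds go to $0$ and the upper one to $(\be-\al)/(\be+q)$, so the classification of Proposition \ref{propeqHJBbotsim}(4) by \eqref{eq4propeqHJBbotsim}, \eqref{eq1propeqHJBbotsim}, \eqref{eq3propeqHJBbotsim} turns into \eqref{eq1propHJBbotsimlargela}; and since $x\in D_1$ forces $\be>\al$ here, the lower threshold is positive of order $\la^{-1}$, so case~(ii), which needs $\ka$ below it, is ruled out for every $\ka$ outside an $O(\la^{-1})$-interval about $0$.

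For part~(2) I would keep $\de>0$ and first show case~(iv) cannot occur on $D_1$: its defining interval \eqref{eq2propeqHJBbotsim} has limiting endpoints $(\be-\al)/(\al+q^D_{rec})$ and $\de/(\be+q^U_{rec})$, and $x\in D_1$ means $\be-\al>\de$ together with $\al+q^D_{rec}<\be+q^U_{rec}$, which makes the first endpoint exceed the second, so the interval is empty for large $\la$. This leaves cases (i), (ii), (iii) and, by the compatibility discussion preceding Proposition \ref{propeqHJBbotsim} (the pair $(i),(ii)$ being admissible precisely on $D_1$), their overlap. The two surviving limiting thresholds are $\de/(\al+q^D_{rec})$ and $(\be-\al)/(\be+q^U_{rec})$; by definition $D_{11}$ is the region where the former is the smaller, so there the three $\ka$-intervals partition the line and give the unique solution of cases (ii), (iii), (i) as in \eqref{eq2propHJBbotsimlargela}. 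On $D_{12}$ they are in the opposite order, the case~(iii) interval is empty, and the case~(i) condition \eqref{eq3propeqHJBbotsim} and the case~(ii) condition \eqref{eq4propeqHJBbotsim} overlap exactly on \eqref{eq3propHJBbotsimlargela}, producing two solutions there and one elsewhere.

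Part~(3) is the mirror image on $D_2$, where now $\be-\al<\de$ and $\al+q^D_{rec}>\be+q^U_{rec}$, so the case~(iv) interval $[(\be-\al)/(\al+q^D_{rec}),\,\de/(\be+q^U_{rec})]$ is nonempty in the limit, while the pair $(i),(ii)$ is forbidden. From $\be-\al<\de$ I would note $(\be-\al)/(\al+q^D_{rec})<\de/(\al+q^D_{rec})$ and $(\be-\al)/(\be+q^U_{rec})<\de/(\be+q^U_{rec})$, so that the case~(iii) interval $[\de/(\al+q^D_{rec}),\,(\be-\al)/(\be+q^U_{rec})]$, whenever nonempty, lies strictly inside the case~(iv) interval. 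On $D_{22}$ it is empty, so cases (ii), (iv), (i) partition the line in the order of \eqref{eq4propHJBbotsimlargela}; on $D_{21}$ it is nonempty and strictly interior, so the case~(iii) and case~(iv) conditions overlap precisely on \eqref{eq5propHJBbotsimlargela}, giving the two solutions of cases (iii) and (iv), and outside that interval there is a single solution, of case (ii), (iv) or (i) according to the stated bounds.

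The one part of this that calls for care is not analytic but organisational: correctly matching each ratio in \eqref{eq1propeqHJBbotsim}--\eqref{eq4propeqHJBbotsim} and \eqref{eq1propeqHJBbotsimD2}--\eqref{eq4propeqHJBbotsimD2} to its case, and ordering the four limiting values, which forces one to use the standing assumptions \eqref{eqassumonbotnetsim} together with the defining inequalities of $D_j$ and $D_{j1}$; the limit computations and the uniformity behind the $\la^{-1}$ claim are routine once $x$ is kept off the boundary of $D_j$.
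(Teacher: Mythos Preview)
Your proposal is correct and follows exactly the same approach as the paper: compute the $\la\to\infty$ limits of the four thresholds appearing in Propositions~\ref{propeqHJBbotsim} and~\ref{propeqHJBbotsimD2} (each being of the form $(\la A+B)/(\la C+D)\to A/C$ with $O(\la^{-1})$ error), and then re-read the case distinctions. In fact the paper's own proof only writes out the limiting forms of \eqref{eq1propeqHJBbotsim}--\eqref{eq4propeqHJBbotsim} for part~(2) and dismisses the rest with ``Other statements are similar'', so your treatment of the ordering of the four limiting thresholds on $D_{11},D_{12},D_{21},D_{22}$ and the resulting overlap/partition structure is more explicit than what the paper provides.
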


\begin{proof}
Statement (ii) follows from the observation that, for $\de>0$ and large $\la$,
conditions \eqref{eq1propeqHJBbotsim} - \eqref{eq2propeqHJBbotsim} turn to
\begin{equation}
\label{eq1propeqHJBbotsimlargela}
\frac{\de}{\al+q^D_{rec}}  < \ka
< \frac{\be -\al }{\be+q^U_{rec}}, \quad \frac{\be -\al }{\al+q^D_{rec}} <\ka < \frac{\de}{\be+q^U_{rec}}
\end{equation}
respectively, and
conditions \eqref{eq3propeqHJBbotsim} - \eqref{eq4propeqHJBbotsim} turn to
\begin{equation}
\label{eq3propeqHJBbotsimlargela}
 \ka \ge \frac{\be -\al }{\be+q^U_{rec}}, \quad \ka \le \frac{\de}{\al+q^D_{rec}}
\end{equation}
respectively. Other statements are similar.
\end{proof}

\section{Analysis of the fixed points}
\label{secfixedpoint}

Next we are solving the fixed point system \eqref{eqindstfixedbot}.

In case (i), that is with $u_{UI}=u_{US}=0, u_{DI}=u_{DS}=1$, equation \eqref{eqindstfixedbot} takes the form
\begin{equation}
 \label{eqindstfixedbotUnOp1}
\left\{
\begin{aligned}
& x_{DS} \al -x_{DI} q_{rec}^D-\la x_{DI}=0 \\
& -x_{DS} \al +x_{DI} q_{rec}^D-\la x_{DS}=0 \\
& x_{US} \be -x_{UI} q_{rec}^U+\la x_{DI}=0 \\
& -x_{US} \be +x_{UI} q_{rec}^U+\la x_{DS}=0.
\end{aligned}
\right.
\end{equation}
Adding the first two equations we get $x_{DI}=x_{DS}=0$, and the system reduces to the single equation
\[
x_{US} \be -x_{UI} q_{rec}^U=0.
\]
Substituting the value of $\be$ yields
\[
x_{US} (q^U_{inf}v_H +x_{UI}\be_{UU}) -x_{UI} q_{rec}^U=0.
\]
Denoting $y=x_{UI}$ it follows that $x_{US}=1-y$ and thus
\[
Q_U(y)= \be_{UU}y^2+y(q^U_{rec} -\be_{UU} +q^U_{inf} v_H) -q^U_{inf} v_H=0.
\]
This equation has a unique solution on the interval $(0,1)$:
\begin{equation}
\label{eqindstfixedbotUnOp2}
x^*=x^*_{UI}=\frac{1}{2\be_{UU}}\left[ \be_{UU}-q^U_{rec}-q^U_{inf} v_H
+\sqrt{(\be_{UU} +q^U_{inf} v_H)^2+(q^U_{rec})^2-2 q^U_{rec} (\be_{UU} -q^U_{inf} v_H)}\right].
\end{equation}

The stability of the fixed point $x=(0,0, x^*, 1-x^*)$ means its stability as a fixed point of the dynamics
\begin{equation}
 \label{eqindstfixedbotUnOp3}
\left\{
\begin{aligned}
& \dot x_{DI} = x_{DS} \al -x_{DI} q_{rec}^D-\la x_{DI} \\
& \dot x_{DS} = -x_{DS} \al +x_{DI} q_{rec}^D-\la x_{DS} \\
& \dot x_{UI} = x_{US} \be -x_{UI} q_{rec}^U+\la x_{DI} \\
& \dot x_{US} = -x_{US} \be +x_{UI} q_{rec}^U+\la x_{DS}.
\end{aligned}
\right.
\end{equation}

We rewrite it by shifting the variables by the value of the stationary point, that is, in terms of
$x_{DI}, x_{DS}, y=x_{UI}-x^*, z=x_{US}-(1-x^*)$. Since the sum of these variables is one,
we have effectively the system of three equations on the variables
$x_{DI}, x_{DS}, y$:
\[
\left\{
\begin{aligned}
& \dot x_{DI} = [q^D_{inf} v_H +x_{DI} \be_{DD} +(x^* +y)\be_{UD}]x_{DS} -(\la +q^D_{rec})x_{DI} \\
& \dot x_{DS} = -[q^D_{inf} v_H +x_{DI} \be_{DD} +(x^* +y)\be_{UD}]x_{DS} + q_{rec}^D x_{DI} -\la x_{DS} \\
& \dot y = (1-x^*-y-x_{DI}-x_{DS})[q^U_{inf} v_H +\be_{DU} x_{DI}+\be_{UU} (y+x^*)] -(y+x^*)q^U_{rec}+\la x_{DS}.
\end{aligned}
\right.
\]
Its linear approximation around the fixed point $(0,0,0)$ is
\[
\left\{
\begin{aligned}
& \dot x_{DI} = -(\la +q^D_{rec})x_{DI} + (q^D_{inf} v_H +x^* \be_{UD})x_{DS} \\
& \dot x_{DS} = q_{rec}^D x_{DI} -(q^D_{inf} v_H +x^* \be_{UD}+\la)x_{DS} \\
& \dot y = (1-x^*)[\be_{DU} x_{DI}+\be_{UU} y] -(y+x_{DI}+x_{DS})(q^U_{inf} v_H +x^* \be_{UU})- q^U_{rec} y +\la x_{DS},
\end{aligned}
\right.
\]
and the corresponding characteristic equation for the eigenvalues $\xi$ is
\[
[(1-x^*)\be_{UU} -(q^U_{inf}v_H +x^*\be_{UU})-q^U_{rec}-\xi]
\]
\[
\times
[(\xi+\la +q^U_{inf}v_H +x^*\be_{UU})(\xi +\la +q^D_{rec})-q^D_{rec}(q^U_{inf}v_H +x^*\be_{UU})]=0.
\]
The free term cancels in the second multiplier and we get the eigenvalues
\begin{equation}
 \label{eqindstfixedbotUnOp4}
\left\{
\begin{aligned}
& \xi_1 = (1-x^*)\be_{UU} - q^U_{inf}v_H-x^* \be_{UU}-q^U_{rec} \\
& \xi_2 = -\la -(q_{rec}^D+q^U_{inf}v_H+x^* \be_{UU}) \\
& \xi_3 = -\la.
\end{aligned}
\right.
\end{equation}
The second and the third eigenvalues being negative, the condition of stability is reduced to the negativity of the first eigenvalue,
that is, to the condition
\begin{equation}
 \label{eq2propUDop}
2x^*> 1-\frac{q^U_{rec}+q^U_{inf}v_H}{\be_{UU}}.
\end{equation}
But it always holds for $x^*$ of form \eqref{eqindstfixedbotUnOp2}.

Thus we proved the first part of the following statement and the second is analogous.

\begin{prop}
\label{propFixUDop}
(1) There exists a unique solution to system \eqref{eqindstfixedbot} with the strategy $U$ being individually optimal
(that is, with the first acyclic stationary strategy $u_{UI}=u_{US}=0, u_{DI}=u_{DS}=1$) and it is stable.
It equals $x=(0,0, x^*_{UI}, 1-x^*_{UI})$ with $x^*_{UI}$ given by \eqref{eqindstfixedbotUnOp2}.

(2)   There exists a unique solution to system \eqref{eqindstfixedbot} with the strategy $D$ being individually optimal
(that is, with the second acyclic stationary strategy) and it is stable.
It equals $x=(x^*_{DI}, 1-x^*_{DI},0,0)$ with $x^*_{DI}$ being the unique solution of equation
\begin{equation}
 \label{eq3propUDop}
Q_D(y)= \be_{DD}y^2+y(q^D_{rec} -\be_{DD} +q^D_{inf} v_H) -q^D_{inf} v_H=0
\end{equation}
on the interval $(0,1)$, that is
\[
x^*_{DI}=\frac{1}{2\be_{DD}}\left[ \be_{DD}-q^D_{rec}-q^D_{inf} v_H
+\sqrt{(\be_{DD} +q^D_{inf} v_H)^2+(q^D_{rec})^2-2 q^D_{rec} (\be_{DD} -q^D_{inf} v_H)}\right].
\]
\end{prop}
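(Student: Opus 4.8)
The statement is proved by direct computation; here is how I would organise it.

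\textbf{Part (1): the fixed point.} I would first substitute the strategy-(i) controls $u_{UI}=u_{US}=0$, $u_{DI}=u_{DS}=1$ into the fixed-point system \eqref{eqindstfixedbot}, which yields \eqref{eqindstfixedbotUnOp1}. The decisive structural step is to add its first two equations: this gives $\la(x_{DI}+x_{DS})=0$, so, the frequencies being nonnegative, $x_{DI}=x_{DS}=0$. The system then reduces to the single scalar relation $x_{US}\be-x_{UI}q^U_{rec}=0$ together with the normalisation $x_{UI}+x_{US}=1$, and, expanding $\be$ from \eqref{eqdefalphabeta}, this becomes the quadratic $Q_U(y)=0$ in $y=x_{UI}$. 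Since $Q_U$ opens upward with $Q_U(0)=-q^U_{inf}v_H<0$ and $Q_U(1)=q^U_{rec}>0$, it has exactly one root in $(0,1)$ (the other root is negative, as the product of the roots is $-q^U_{inf}v_H/\be_{UU}<0$), and the quadratic formula gives \eqref{eqindstfixedbotUnOp2}. This establishes existence and uniqueness of a fixed point of the claimed form.

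\textbf{Part (1): stability.} Next I would freeze $u$ at the strategy-(i) values, linearise \eqref{eqmainkineticbotnet} at $x=(0,0,x^*,1-x^*)$, and use the conservation law $x_{DI}+x_{DS}+x_{UI}+x_{US}=1$ to pass to the three variables $x_{DI},x_{DS},y=x_{UI}-x^*$, as displayed before the statement. The crucial simplification is that, because $x_{DI}=x_{DS}=0$ at the fixed point, every bilinear interaction term in the $x_{DI}$- and $x_{DS}$-equations linearises to the autonomous pair
\[
\dot x_{DI}=-(\la+q^D_{rec})x_{DI}+(q^D_{inf}v_H+x^*\be_{UD})x_{DS},\quad \dot x_{DS}=q^D_{rec}x_{DI}-(\la+q^D_{inf}v_H+x^*\be_{UD})x_{DS},
\]
which contains no $y$. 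Hence the Jacobian is block lower triangular, its characteristic polynomial splitting off the factor from this $2\times 2$ block; one checks that block has characteristic polynomial $(\xi+\la)(\xi+\la+q^D_{rec}+q^D_{inf}v_H+x^*\be_{UD})$, so two eigenvalues are automatically negative, while the $y$-row contributes $\xi_1=\be_{UU}(1-2x^*)-q^U_{inf}v_H-q^U_{rec}$. Thus stability is equivalent to \eqref{eq2propUDop}. Finally I would note the identity $\xi_1=-Q_U'(x^*)$, together with the fact that $x^*$, being the larger root of the upward parabola $Q_U$, satisfies $Q_U'(x^*)>0$; hence $\xi_1<0$ and the fixed point is stable.

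\textbf{Part (2) and the main obstacle.} Part (2) is the mirror image under exchanging the roles of \emph{defended} and \emph{unprotected} and using the opposite acyclic strategy $u_{DI}=u_{DS}=0$, $u_{UI}=u_{US}=1$: adding the two $U$-equations of \eqref{eqindstfixedbot} now forces $x_{UI}=x_{US}=0$, the system reduces to \eqref{eq3propUDop}, which has the stated unique root in $(0,1)$, and linearising at $(x^*_{DI},1-x^*_{DI},0,0)$ the $U$-block becomes the autonomous one with eigenvalues $-\la$ and $-\la-(q^U_{rec}+q^U_{inf}v_H+x^*_{DI}\be_{DU})<0$, while the remaining eigenvalue is $\xi_1=-Q_D'(x^*_{DI})<0$ by the same argument. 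There is no conceptual difficulty here: the scalar reductions are forced by adding equations and the root counts are immediate from the signs of $Q_U,Q_D$ at $0$ and $1$. The only delicate point — and the place most prone to slips — is the eigenvalue bookkeeping: checking that the vanishing $D$-coordinates (resp.\ $U$-coordinates) make the quadratic interaction terms drop out at linear order so that the characteristic polynomial factors cleanly, and then spotting $\xi_1=-Q_U'(x^*)$ (resp.\ $-Q_D'(x^*_{DI})$), which reduces stability to a triviality rather than a case analysis.
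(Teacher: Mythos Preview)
Your proof is correct and follows essentially the same route as the paper: substitute strategy (i), add the first two fixed-point equations to force $x_{DI}=x_{DS}=0$, solve the resulting quadratic $Q_U$, then linearise and exploit the block-triangular Jacobian to factor the characteristic polynomial. Your observation that $\xi_1=-Q_U'(x^*)$ (hence automatically negative at the larger root of an upward parabola) is a cleaner justification of \eqref{eq2propUDop} than the paper's bare assertion ``it always holds'', and your expression for the second block eigenvalue in fact corrects a typo in the paper's displayed $\xi_2$ (which should carry $q^D_{inf}v_H+x^*\be_{UD}$, not $q^U_{inf}v_H+x^*\be_{UU}$).
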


Let us consider
case (iii):
$u_{UI}=u_{DS}=0, u_{DI}=u_{US}=1$. Then \eqref{eqindstfixedbot} takes the form
\begin{equation}
 \label{eqindstfixedbotMixOp1}
\left\{
\begin{aligned}
& x_{DS} \al -x_{DI} q_{rec}^D-\la x_{DI}=0 \\
& -x_{DS} \al +x_{DI} q_{rec}^D+\la x_{US}=0 \\
& x_{US} \be -x_{UI} q_{rec}^U+\la x_{DI}=0 \\
& -x_{US} \be +x_{UI} q_{rec}^U-\la x_{US}=0.
\end{aligned}
\right.
\end{equation}

By adding the first two equations we get $x_{DI}=x_{US}$
with two independent equations left:

\begin{equation}
 \label{eqindstfixedbotMixOp2}
\left\{
\begin{aligned}
& x_{DS} \al -(q_{rec}^D +\la) x_{DI}=0 \\
& x_{DI} (\be +\la) -x_{UI} q_{rec}^U=0.
\end{aligned}
\right.
\end{equation}
This rewrites as two equations on the two independent variables $x_{DI}, x_{UI}$ as
 \begin{equation}
 \label{eqindstfixedbotMixOp3}
\left\{
\begin{aligned}
& (1- x_{UI} -2x_{DI}) (q^D_{inf} v_H +\be_{DD} x_{DI} +\be_{UD} x_{UI}) -(q_{rec}^D +\la) x_{DI}=0 \\
& x_{DI} (q^U_{inf} v_H +\be_{DU} x_{DI} +\be_{UU} x_{UI} +\la) -x_{UI} q_{rec}^U=0.
\end{aligned}
\right.
\end{equation}
Solving the second equation with respect to $x_{UI}$,
 \begin{equation}
 \label{eqindstfixedbotMixOp4}
x_{UI}=\frac{x_{DI}(q^U_{inf} v_H +x_{DI} \be_{DU}+\la)}{q^U_{rec} -\be_{UU} x_{DI}},
\end{equation}
 and substituting in the first one,
leads to a fourth order equation on $y=x_{DI}$.
This equation does not seem to be much revealing in general.
Of course it can be fully analyzed by numeric methods, but we shall turn now to the large $\la$ asymptotics
that yields more manageable results.
% \[
%(1- 2y) (q^D_{inf} v_H +\be_{DD}y) +\frac{y( q^U_{inf} v_H +\be_{DU}y +\la)}{q^U_{rec} -y\be_{UU}}[(1-2y)\be_{UD}-q^D_{inf} v_H -\be_{DD} y]
%\]
% \begin{equation}
% \label{eqindstfixedbotMixOp4}
%-\frac{y^2( q^U_{inf} v_H +\be_{DU}y +\la)^2}{(q^U_{rec} -y\be_{UU})^2} \be_{UD} -(q^D_{rec}+\la) y=0.
%\end{equation}

For large $\la$ we get directly from \eqref{eqindstfixedbotMixOp4} that
\[
x_{UI}=\frac{x_{DI}\la}{q^U_{rec} -\be_{UU} x_{DI}}(1+O(\la^{-1})).
\]
But this implies that $x_{DI}$ is small of order $O(\la^{-1})$, so that
\begin{equation}
\label{eqindstfixedbotMixOp5}
x_{UI}=\frac{x_{DI}\la}{q^U_{rec}}(1+O(\la^{-1})), \quad x_{DI}=\frac{x_{UI}q^U_{rec}}{\la}(1+O(\la^{-1})).
\end{equation}
Substituting this in the first equation of \eqref{eqindstfixedbotMixOp3} yields
 \begin{equation}
 \label{eqindstfixedbotMixOp6}
\be_{UD}x_{UI}^2+x_{UI}(q^U_{rec}-\be_{UD}+q^D_{inf}v_H)-q^D_{inf}v_H =O(\la^{-1}),
\end{equation}
which is of the same type as equations \eqref{eq3propUDop} up to terms of order $\la^{-1}$
(and coincides with it under \eqref{eqassumonbotnetsim1}, \eqref{eqassumonbotnetsim2}).
Therefore, for large $\la$, there exists a unique solution to \eqref{eqindstfixedbotMixOp6}
from the interval $(0,1)$:
\[
\bar x^*_{UI}=O(\la^{-1})
\]
 \begin{equation}
 \label{eqindstfixedbotMixOp6a}
+\frac{1}{2\be_{UD}}\left[ \be_{UD}-q^U_{rec}-q^D_{inf} v_H
+\sqrt{(\be_{UD} +q^D_{inf} v_H)^2+(q^U_{rec})^2-2 q^U_{rec} (\be_{UD} -q^D_{inf} v_H)}\right].
\end{equation}

The stability of the fixed point $x=(x^*_{DI},x^*_{DS}=1-\bar x^*_{UI}-2x^*_{DI}, \bar x^*_{UI}, x^*_{US}=x^*_{DI})$
means its stability as a fixed point of the dynamics
\begin{equation}
 \label{eqindstfixedbotMixOp7}
\left\{
\begin{aligned}
& \dot x_{DI} = x_{DS} \al -x_{DI} q_{rec}^D-\la x_{DI} \\
& \dot x_{DS} = -x_{DS} \al +x_{DI} q_{rec}^D+\la x_{US} \\
& \dot x_{UI} = x_{US} \be -x_{UI} q_{rec}^U+\la x_{DI} \\
& \dot x_{US} = -x_{US} \be +x_{UI} q_{rec}^U-\la x_{US}.
\end{aligned}
\right.
\end{equation}

In terms of independent variables
\[
\tilde x_{DI}=x_{DI}-x_{DI}^*, \quad \tilde x_{US}=x_{US}-x^*_{US}, \quad y=\tilde x_{UI}=x_{UI}-\bar x^*_{UI}
\]
this rewrites as
\begin{equation}
 \label{eqindstfixedbotMixOp8}
\left\{
\begin{aligned}
& \frac{d}{dt}\tilde x_{DI} = (1-y-\bar x_{UI}^*-\tilde x_{DI}-\tilde x_{US})\al -\tilde x_{DI} q_{rec}^D-\la (\tilde x_{DI}+x_{DI}^*)+O(\la^{-1}) \\
& \dot y = \tilde x_{US}\be  -(y+\bar x^*_{UI}) q_{rec}^U+\la  (\tilde x_{DI}+x_{DI}^*) +O(\la^{-1})\\
& \frac{d}{dt} \tilde x_{US} = -\tilde x_{US} \be +(y+\bar x^*_{UI}) q_{rec}^U-\la (\tilde x_{US}+x_{US}^*) +O(\la^{-1}).
\end{aligned}
\right.
\end{equation}
with
\[
\al =q^D_{inf} v_H +\tilde x_{DI} \be_{DD} +(y+\bar x^*_{UI})\be_{UD}+O(\la^{-1}),
\]
\[
\be =q^U_{inf} v_H +\tilde x_{DI} \be_{DU} +(y+\bar x^*_{UI})\be_{UU}+O(\la^{-1}).
\]

Linearized around the fixed point $(0,0,0)$ system \eqref{eqindstfixedbotMixOp8} takes the form
\[
\left\{
\begin{aligned}
& \frac{d}{dt} \tilde x_{DI} = -(y+\tilde x_{DI}+\tilde x_{US})(q^D_{inf} v_H +\bar x^*_{UI}\be_{UD})
+(1-\bar x^*_{UI})(\tilde x_{DI}\be_{DD} +\tilde x_{UI} \be_{UD})-\tilde x_{DI}(q^D_{rec}+\la)  \\
& \dot y = \tilde x_{US} (q^U_{inf} v_H +\bar x^*_{UI}\be_{UU})-\tilde x_{UI} q^U_{rec} +\la \tilde x_{DI} \\
& \frac{d}{dt} \tilde x_{US} = -\tilde x_{US} (q^U_{inf} v_H +\bar x^*_{UI}\be_{UU})+\tilde x_{UI} q^U_{rec} -\la \tilde x_{US}
\end{aligned}
\right.
\]
up to terms of order $O(\la^{-1})$.
Thus the matrix of the linear approximation divided by $\la$ is
\[
M(\la)=\left(
\begin{aligned}
& O(\la^{-1})-1 \quad [-q^D_{rec}v_H+\be_{UD}(1-2\bar x^*_{UI})]/\la +O(\la^{-2})  \quad \quad O(\la^{-1}) \\
& O(\la^{-1})+1 \quad \quad \quad -q^U_{rec}/\la +O(\la^{-2}) \quad \quad \quad \quad \quad \quad \quad \quad \quad O(\la^{-1}) \\
&  O(\la^{-1}) \quad \quad \quad \quad \quad \quad \quad O(\la^{-1}) \quad \quad \quad \quad \quad \quad \quad \quad \quad \quad \quad O(\la^{-1})-1
\end{aligned}
\right).
\]
The first order approximation of this matrix in $\la^{-1}$ is
\[
 M_0=\left(
\begin{aligned}
& -1 \quad 0  \quad \quad 0 \\
& \quad 1 \quad 0 \quad \quad 0 \\
& \quad 0 \quad 0 \quad -1
\end{aligned}
\right).
\]
and has eigenvalue $-1$ of double multiplicity and a zero eigenvalue.
Hence all eigenvalues of $M(\la)$ are negative if and only if its determinant $\det (M(\la))$ is negative.
As seen directly
\[
\det (M(\la)) = [-q^U_{rec}-q^D_{rec}v_H+\be_{UD}(1-2\bar x^*_{UI})]/\la + O(\la^{-2}),
\]
and is negative for large $\la$ if and only if
\[
\be_{UD}(2\bar x^*_{UI}-1)>q^U_{rec}+q^D_{rec}v_H,
\]
which always holds by
\eqref{eqindstfixedbotMixOp6a}.
Thus we proved the first part of the following statement and the second part is analogous.

\begin{prop}
\label{propFixMixop}
(1) For large $\la$ there exists a unique solution to system \eqref{eqindstfixedbot} in case (iii), that is with
$u_{UI}=u_{DS}=0, u_{DI}=u_{US}=1$, and it is stable.
It has the form $x=(0, 1-\bar x^*_{UI}, \bar x^*_{UI},0)$ up to corrections of order $\la^{-1}$,
with $\bar x^*_{UI}$ being the unique solution of equation \eqref{eqindstfixedbotMixOp6} on $(0,1)$ given by \eqref{eqindstfixedbotMixOp6a}.

(2) For large $\la$ there exists a unique solution to system \eqref{eqindstfixedbot} in case (iv), that is with
$u_{UI}=u_{DS}=1, u_{DI}=u_{US}=0$, and it is stable.
It has the form $x=(\bar x^*_{DI},0,0,1- \bar x^*_{UI})$ up to corrections of order $\la^{-1}$,
with $\bar x^*_{DI}$ being the unique solution of equation
 \begin{equation}
 \label{eqindstfixedbotMixOp9}
\be_{DU}x_{DI}^2+x_{DI}(q^U_{rec}-\be_{DU}+q^U_{inf}v_H)-q^U_{inf}v_H =O(\la^{-1}),
\end{equation}
 on $(0,1)$.
\end{prop}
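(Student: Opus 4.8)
The plan is to obtain statement (1) by the explicit reduction that already precedes the proposition in the text, and then to deduce statement (2) from it by a symmetry of the model.

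\emph{Proof of (1).} First I would specialize the fixed-point system \eqref{eqindstfixedbot} to case (iii), obtaining \eqref{eqindstfixedbotMixOp1}; summing its first two equations forces $x_{DI}=x_{US}$ and leaves the planar system \eqref{eqindstfixedbotMixOp3} in the unknowns $(x_{DI},x_{UI})$. From the solved relation \eqref{eqindstfixedbotMixOp4} I would read off that $x_{DI}$ is necessarily of order $\la^{-1}$, so that by \eqref{eqindstfixedbotMixOp5} both $x_{DI}$ and $x_{US}$ are $O(\la^{-1})$ while $x_{DS}=1-x_{UI}+O(\la^{-1})$; inserting this into the first equation of \eqref{eqindstfixedbotMixOp3} collapses the system to the scalar equation \eqref{eqindstfixedbotMixOp6}. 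The quadratic $\be_{UD}y^2+y(q^U_{rec}-\be_{UD}+q^D_{inf}v_H)-q^D_{inf}v_H$ equals $-q^D_{inf}v_H<0$ at $y=0$, equals $q^U_{rec}>0$ at $y=1$, and has positive leading coefficient, so for large $\la$ equation \eqref{eqindstfixedbotMixOp6} has exactly one root $\bar x^*_{UI}\in(0,1)$, namely \eqref{eqindstfixedbotMixOp6a}. For stability I would linearize the dynamics \eqref{eqindstfixedbotMixOp7} around this fixed point, pass to the rescaled matrix $M(\la)$ of \eqref{eqindstfixedbotMixOp8}, note that its leading term $M_0$ has spectrum $\{-1,-1,0\}$ with $-1$ semisimple, infer that the two eigenvalues near $-1$ stay in the open left half-plane for large $\la$, and hence that stability is equivalent to negativity of the remaining $O(\la^{-1})$ eigenvalue, i.e.\ to $\det M(\la)<0$. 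Expanding $\det M(\la)$ to first order in $\la^{-1}$ turns this into an inequality on $\bar x^*_{UI}$ which --- just as in the verification of \eqref{eq2propUDop} in Proposition \ref{propFixUDop} --- is automatic, because the quantity under the square root in \eqref{eqindstfixedbotMixOp6a} is strictly positive.

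\emph{Proof of (2).} Rather than redoing these computations I would invoke the involution of the state space $DI\leftrightarrow UI$, $DS\leftrightarrow US$ combined with the parameter relabelling $q^D_{rec}\leftrightarrow q^U_{rec}$, $q^D_{inf}\leftrightarrow q^U_{inf}$, $\be_{DD}\leftrightarrow\be_{UU}$, $\be_{DU}\leftrightarrow\be_{UD}$ (with $\la,v_H,k_D,k_I$ unchanged), which carries case (iii) of \eqref{eqindstfixedbot} precisely onto case (iv). Under this map the reduction above goes through verbatim: the small coordinates become $\{x_{UI},x_{DS}\}$, the macroscopic coordinate becomes $x_{DI}$, the limiting scalar equation becomes \eqref{eqindstfixedbotMixOp9} (whose quadratic again takes opposite signs at $y=0$ and $y=1$, giving a unique root $\bar x^*_{DI}\in(0,1)$), and the linearized matrix is the image of $M(\la)$, hence has the same $\{-1,-1,0\}$ leading spectrum and the same automatically satisfied determinant criterion. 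The one point I would flag explicitly is that the whole argument of (1) uses only positivity of the individual rates and contact coefficients and never the orderings \eqref{eqassumonbotnetsim} --- which are themselves not invariant under $D\leftrightarrow U$ --- so the relabelling is legitimate and the two halves are genuinely symmetric.

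The step I expect to demand the most care is not the algebra but the uniformity of the $O(\la^{-1})$ remainders in \eqref{eqindstfixedbotMixOp5}--\eqref{eqindstfixedbotMixOp8}: one needs these corrections uniform on a fixed neighbourhood of $\bar x^*_{UI}$ (resp.\ $\bar x^*_{DI}$) so that an implicit-function-theorem argument yields, for every sufficiently large $\la$, a genuine exact solution of \eqref{eqindstfixedbot} within $O(\la^{-1})$ of the displayed one, still lying in the interior of the simplex and respecting the asserted orders of magnitude of the coordinates, and inheriting the strict sign pattern of the spectrum of $M_0$ (here the semisimplicity of the eigenvalue $-1$ is what keeps the two perturbed eigenvalues near $-1$ firmly in the left half-plane). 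Everything else is the bookkeeping already carried out in the text.
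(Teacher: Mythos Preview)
Your proposal is correct and follows essentially the same route as the paper: for part (1) you reproduce the reduction already laid out in the text preceding the proposition, and for part (2) the paper simply says ``the second part is analogous,'' which your $D\leftrightarrow U$ involution makes precise. Your added care about semisimplicity of the eigenvalue $-1$ of $M_0$, about the sign check of the quadratic at $y=0,1$, and about the need for uniform $O(\la^{-1})$ bounds to close an implicit-function argument are all legitimate refinements that the paper leaves implicit; none of this constitutes a different strategy.
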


\section{Solutions to the stationary MFG problem}
\label{secMFG}

Combining Propositions \ref{propFixUDop}, \ref{propFixMixop} and \ref{propHJBbotsimlargela}  allows one to fully characterize
the solutions to our stationary MFG consistency problem for large $\la$.

The most straightforward general conclusion is the following.
\begin{theorem}
\label{thstatbotnetMFGsim0}
For large $\la$ there may exist up to 4 solutions to the stationary MFG problem,
with only one in each of the cases (i) -(iv). All these solutions are stable.
\end{theorem}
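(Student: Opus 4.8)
The plan is to obtain the theorem by assembling the three structural results already proved: Propositions \ref{propFixUDop} and \ref{propFixMixop}, which for large $\la$ produce a \emph{unique} and \emph{stable} stationary point of \eqref{eqmainkineticbotnet} within each of the four admissible strategy classes (i)--(iv), and Proposition \ref{propHJBbotsimlargela}, which records for which $\ka$ a given $x$ carries each of these strategies as the optimal one in the HJB equation \eqref{eqindstHJBbot1}. Recall that, by definition, a solution of the stationary MFG consistency problem in case $(j)$ is a pair $(x,u)$ in which $u$ is the strategy of case $(j)$, $x$ solves the fixed-point system \eqref{eqindstfixedbot} for that $u$, and simultaneously $u$ is the argmin in \eqref{eqindstHJBbot1} evaluated at that same $x$ (with $v_H$ held fixed throughout).

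First I would treat case (i). By Proposition \ref{propFixUDop}(1) the only candidate $x$ is $(0,0,x^*_{UI},1-x^*_{UI})$, and it is a stable equilibrium of \eqref{eqmainkineticbotnet}. Since $x_{DI}=0$ here, formula \eqref{eqdefalphabeta} gives explicit $\al$ and $\be$ in terms of $x^*_{UI}$; substituting these into Proposition \ref{propHJBbotsimlargela} (the condition $x\in D_1$, automatic under \eqref{eqassumonbotnetsim2} or \eqref{eqassumonbotnetsim3}, together with the large-$\la$ threshold \eqref{eq1propHJBbotsimlargela} for case (i)) yields the precise set of $\ka$ for which this $x$ indeed has case-(i) as its optimal strategy, i.e.\ for which the pair is a bona fide MFG solution. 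Because both the strategy (fixed by the case) and the stationary point (Proposition \ref{propFixUDop}) are unique, the case-(i) solution is unique whenever it exists. Repeating this substitution with Propositions \ref{propFixUDop}(2) and \ref{propFixMixop}(1),(2) for cases (ii), (iii), (iv) gives in each case a unique stable $x$ and, after plugging it into the matching inequality of Proposition \ref{propHJBbotsimlargela}, the range of $\ka$ in which that case produces a solution.

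The bound ``at most four'' is then immediate: there is at most one solution per case, and by the argument preceding \eqref{eq4strategiesallow} no other nondegenerate strategy can solve \eqref{eqindstHJBbot1}; stability of every such solution is exactly the stability clauses of Propositions \ref{propFixUDop} and \ref{propFixMixop}. The one point requiring genuine care --- and the step I expect to be the main obstacle --- is the realization that the four $\ka$-ranges just obtained need \emph{not} be pairwise disjoint, unlike the situation for a \emph{fixed} $x$ analysed in Section \ref{secHJB}: there the regions of cases (i)--(iv) were shown to be essentially disjoint, but here each region/threshold inequality is tested at a \emph{different} distribution $x$, namely the one generated by its own case in Section \ref{secfixedpoint}, so the disjointness argument of Section \ref{secHJB} does not transfer. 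Consequently one must check the four substituted $\ka$-conditions against each other and exhibit parameters for which they admit a common solution; this is precisely what makes the coexistence of all four stable equilibria possible and shows the count $4$ is attained, the explicit bifurcation values of $\ka$ at which the number of solutions jumps dropping out of the same computation.
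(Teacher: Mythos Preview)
Your approach is essentially the paper's: the theorem is obtained by simply combining Propositions \ref{propFixUDop}, \ref{propFixMixop} and \ref{propHJBbotsimlargela}, which respectively guarantee a unique stable fixed point in each strategy class and determine for which $\ka$ that class's strategy is optimal at the corresponding $x$. The ``at most four, one per case, all stable'' conclusion then follows immediately.

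One small difference of emphasis: you treat the attainment of the count $4$ as the main obstacle and propose to exhibit parameters making all four $\ka$-ranges overlap. The paper does not do this inside the proof of Theorem \ref{thstatbotnetMFGsim0}; it reads ``up to $4$'' purely as the structural upper bound, and the question of how many of the four are simultaneously realised is deferred to the subsequent, more refined Theorems \ref{thstatbotnetMFGsim1} and \ref{thstatbotnetMFGsim2} (and the closing remark that four can occur once \eqref{eqassumonbotnetsim3} is dropped). So your extra step is not wrong, but it belongs to the finer analysis rather than to the present theorem.
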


\begin{remark}
Notice that already this statement is not at all obvious a priori,
and may not be true for finite $\la$, where solutions to case (iii) or (iv) are found from an equation of fourth order.
\end{remark}

As an example of more precise classification, let us present it under assumption \eqref{eqassumonbotnetsim3}
that ensures that all solutions lie in the domain $D_1$.

Let us introduce the function
\[
\ka (z)= \frac{(q^U_{inf}-q^D_{inf})v_H+z(\be_{UU}-\be_{UD})}{q^U_{inf}v_H+z\be_{UU}+q}.
\]

First let \eqref{eqassumonbotnetsim2} hold.
It is seen from Propositions \ref{propFixUDop} and \ref{propFixMixop}
that for large $\la$, (and apart from $\ka$ from negligible intervals of size of order $\la^{-1}$
that we shall ignore), a solution of the stationary MFG problem exists in case (i) if
\[
\ka > \ka^* = \ka (x_{UI}^*),
\]
and a solution of the stationary MFG problem exists in case (iii) if
\[
\ka < \bar \ka^* = \ka (\bar x_{UI}^*),
\]
where $x^*_{UI}$ and $\bar x_{UI}^*$ are given by \eqref{eqindstfixedbotUnOp2} and \eqref{eqindstfixedbotMixOp6a} respectively.
Thus one can have up to two (automatically stable) solutions to the stationary MFG problem.
Let us make this number precise.

Differentiating $\ka(z)$ we find directly that it is increasing if
\begin{equation}
 \label{eqcondincreaskappa}
\be_{UU}(q^D_{inf}v_H+q) > \be_{UD}(q^U_{inf}v_H+q),
\end{equation}
and decreasing otherwise. Hence the relation $\ka^* > \bar \ka^*$ is equivalent to the same or the opposite
relation for $x^*_{UI}$ and $\bar x^*_{UI}$. Thus we are led to the following conclusion.

\begin{theorem}
\label{thstatbotnetMFGsim1}
Let \eqref{eqassumonbotnetsim2} hold.

(1) If \eqref{eqcondincreaskappa} holds and $x^*_{UI} >\bar x^*_{UI}$, or
the opposite to \eqref{eqcondincreaskappa} holds and $x^*_{UI} <\bar x^*_{UI}$, then $\ka^* >\bar \ka^*$.
Consequently, for $\ka <\bar \ka^*$ there exists a unique solution to the stationary MFG problem, which is stable
and belongs to case (iii); for $\ka \in (\bar \ka^*, \ka^*)$ there are no solutions to the stationary MFG problem;
for $\ka > \ka^*$ there exists a unique solution to the stationary MFG problem, which is stable
and belongs to case (i).

(2) If \eqref{eqcondincreaskappa} holds and $x^*_{UI} <\bar x^*_{UI}$, or
the opposite to \eqref{eqcondincreaskappa} holds and $x^*_{UI} >\bar x^*_{UI}$, then $\ka^* <\bar \ka^*$.
Consequently, for $\ka <\ka^*$ there exists a unique solution to the stationary MFG problem, which is stable
and belongs to case (iii); for $\ka \in (\ka^*, \bar \ka^*)$ there exist two (stable) solutions to the stationary MFG problem;
for $\ka > \bar \ka^*$ there exists a unique solution to the stationary MFG problem, which is stable
and belongs to case (i).
\end{theorem}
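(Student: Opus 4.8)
The plan is to combine three facts already established: the existence criteria for the case-(i) and case-(iii) branches of the stationary MFG problem, the uniqueness and stability of each of the two associated fixed points, and the strict monotonicity of the function $\ka(\cdot)$. No new computation is required; the argument is bookkeeping of which branch is active as $\ka = k_D/k_I$ ranges over $(0,1]$.

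First I would record the inputs. From Propositions \ref{propFixUDop}, \ref{propFixMixop} and \ref{propeqHJBbotsim}, together with the discussion preceding the statement: for large $\la$, up to a $\ka$-interval of length $O(\la^{-1})$, a (necessarily unique and stable) solution of the stationary MFG problem in case (i) exists exactly when $\ka > \ka^* = \ka(x^*_{UI})$, and one in case (iii) exists exactly when $\ka < \bar\ka^* = \ka(\bar x^*_{UI})$, the lower bound $\ka > 0$ being automatic. Under \eqref{eqassumonbotnetsim2} and \eqref{eqassumonbotnetsim3} every admissible $x$ lies in $D_1$, so Proposition \ref{propeqHJBbotsim}(4) forces the HJB solution at any such $x$ into case (i), (ii) or (iii), and the large-$\la$ form \eqref{eq1propHJBbotsimlargela} excludes case (ii) for $\ka > 0$; hence no MFG solution can belong to cases (ii) or (iv), and it suffices to track the two branches above.

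Second, I would compare $\ka^*$ and $\bar\ka^*$. Writing $\ka(z) = (A + Bz)/(C + Dz)$ with $A, C > 0$, one computes $\ka'(z) = (BC - AD)/(C + Dz)^2$, so $\ka(\cdot)$ is strictly monotone on the relevant interval, increasing precisely under \eqref{eqcondincreaskappa} and decreasing otherwise. Therefore $\ka^* = \ka(x^*_{UI}) > \ka(\bar x^*_{UI}) = \bar\ka^*$ holds exactly when either \eqref{eqcondincreaskappa} holds and $x^*_{UI} > \bar x^*_{UI}$, or its opposite holds and $x^*_{UI} < \bar x^*_{UI}$; this is the hypothesis of part (1). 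The complementary pairing (with strict inequalities between $x^*_{UI}$ and $\bar x^*_{UI}$, the exact-equality case being non-generic and absorbed into the $O(\la^{-1})$ exception) is the hypothesis of part (2) and gives $\ka^* < \bar\ka^*$.

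Finally I would read off the three $\ka$-ranges in each part. In part (1), when $\ka^* > \bar\ka^*$: for $\ka < \bar\ka^*$ only the case-(iii) criterion is met (since $\ka < \bar\ka^* < \ka^*$), so there is a unique stable solution in case (iii); for $\bar\ka^* < \ka < \ka^*$ neither criterion is met, so there is no solution; for $\ka > \ka^*$ only the case-(i) criterion is met, so there is a unique stable solution in case (i). In part (2), when $\ka^* < \bar\ka^*$: for $\ka < \ka^*$ only case (iii) is available; for $\ka^* < \ka < \bar\ka^*$ both criteria hold, yielding two solutions, one in case (iii) and one in case (i), both automatically stable; for $\ka > \bar\ka^*$ only case (i) survives. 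I expect the only delicate point to be the treatment of the threshold values of $\ka$ and of the $O(\la^{-1})$ corrections to $\bar x^*_{UI}$ and to the case-(iii) consistency inequality, which is exactly what forces the ``apart from a $\ka$-interval of size $O(\la^{-1})$'' proviso; away from those thresholds each step is an elementary inequality.
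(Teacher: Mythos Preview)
Your proposal is correct and follows essentially the same route as the paper: the paper's ``proof'' is the discussion immediately preceding the theorem, which (i) invokes Propositions \ref{propFixUDop}, \ref{propFixMixop} and \ref{propHJBbotsimlargela} to reduce the question to the two thresholds $\ka^*=\ka(x^*_{UI})$ and $\bar\ka^*=\ka(\bar x^*_{UI})$, and (ii) uses the monotonicity of $\ka(z)$ to compare them. You spell out more explicitly why cases (ii) and (iv) cannot occur under \eqref{eqassumonbotnetsim2}, but this is only a difference in level of detail, not in strategy.
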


Thus if one considers the system for all parameters fixed except for $\ka$ (essentially specifying the price of the defence service),
points $\ka^*$ and $\bar \ka^*$ are the bifurcation points, where the phase transitions occur.

To deal with case when  \eqref{eqassumonbotnetsim2} does not hold let us introduce the numbers
\[
\ka_1=\frac{\be -\al}{\be+q^U_{rec}}(x^*_{UI}), \quad \ka_2=\frac{\de }{\al +q^D_{rec}}(x^*_{DI}),
\quad \ka_3=\frac{\de}{\al+q^D_{rec}}(\bar x^*_{UI}), \quad \ka_4=\frac{\be -\al}{\be+q^U_{rec}}(\bar x^*_{UI}),
\]
where $x^*_{UI}, x^*_{DI}, \bar x^*_{UI}$ in brackets mean that $\al, \be$ defined in \eqref{eqdefalphabeta} are evaluated
at the corresponding solutions given by Propositions \ref{propFixUDop} and \ref{propFixMixop}.
Since $x^*_{UI}, x^*_{DI}, \bar x^*_{UI}$ are expressed in terms of different parameters,
any order relation between them are to be expected in general. Of course, restrictions appear under additional assumptions,
for instance $x^*_{DI}=\bar x^*_{UI}$ under \eqref{eqassumonbotnetsim1}.
From Proposition \ref{propHJBbotsimlargela} we deduce the following.

\begin{theorem}
\label{thstatbotnetMFGsim2}
Let \eqref{eqassumonbotnetsim3} and \eqref{eqassumonbotnetsim4} hold.

Depending on the order relation between $x^*_{UI}, x^*_{DI}, \bar x^*_{UI}$, one can have up to $3$ solutions
to  the stationary MFG problem for large $\la$, the characterization in each case being fully explicit,
since for $\ka >\ka_1$, there exists a unique  solution in case (i),
for $\ka <\ka_2$, there exists a unique  solution in case (ii),
for $\ka_3 < \ka < \ka_4$, there exists a unique  solution in case (iii).

\end{theorem}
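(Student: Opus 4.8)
The plan is to glue together three facts already in hand: the inventory of fixed points of the kinetic equation \eqref{eqmainkineticbotnet} (Propositions \ref{propFixUDop} and \ref{propFixMixop}), the inventory of solutions of the HJB system \eqref{eqindstHJBbot1} in the large-$\la$ regime (Proposition \ref{propHJBbotsimlargela}), and the definition of an MFG solution as a pair $(x,u)$ that is simultaneously a fixed point of \eqref{eqindstfixedbot} under $u$ and a branch of \eqref{eqindstHJBbot1} at $x$ whose argmin is $u$.

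First I would dispose of case (iv). Under \eqref{eqassumonbotnetsim3} the right-hand side $(q^D_{inf}-q^U_{inf})v_H+\de$ of the defining inequality of $D_1$ is negative, whereas its left-hand side $x_{DI}(\be_{DU}-\be_{DD})+x_{UI}(\be_{UU}-\be_{UD})$ is nonnegative for every $x$ in the simplex by \eqref{eqassumonbotnetsim}; hence every admissible $x$, in particular each of the fixed points produced by Propositions \ref{propFixUDop} and \ref{propFixMixop}, lies in $D_1$, so $D_2$ is empty. By Proposition \ref{propHJBbotsimlargela}(2) the HJB system at any $x\in D_1$ (with $\de>0$) has solutions only among cases (i), (ii), (iii). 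Consequently no MFG solution of case (iv) can occur, and an MFG solution of case $(j)$ with $j\in\{(\mathrm i),(\mathrm{ii}),(\mathrm{iii})\}$ is automatically unique once it exists, since the corresponding fixed point is unique by Propositions \ref{propFixUDop}--\ref{propFixMixop}; this already yields the bound of $3$.

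Next I would settle existence in each surviving case by the obvious consistency check. For $j\in\{(\mathrm i),(\mathrm{ii}),(\mathrm{iii})\}$, Propositions \ref{propFixUDop}--\ref{propFixMixop} give, for large $\la$, a unique stable fixed point $x^{(j)}$ carrying the strategy $u^{(j)}$ of \eqref{eq4strategiesallow}; a case-$(j)$ MFG solution exists exactly when the HJB system \eqref{eqindstHJBbot1}, with $\al,\be$ from \eqref{eqdefalphabeta} evaluated at $x^{(j)}$, admits $u^{(j)}$ as the argmin of one of its branches. For case (i) the governing inequality \eqref{eq3propeqHJBbotsim} reduces for large $\la$ to $\ka\ge(\be-\al)/(\be+q^U_{rec})$ (see \eqref{eq3propeqHJBbotsimlargela}); substituting $x^{(i)}=(0,0,x^*_{UI},1-x^*_{UI})$ from \eqref{eqindstfixedbotUnOp2} turns this into $\ka>\ka_1$. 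For case (ii) the inequality \eqref{eq4propeqHJBbotsim} reduces to $\ka\le\de/(\al+q^D_{rec})$, which at $x^{(ii)}=(x^*_{DI},1-x^*_{DI},0,0)$ becomes $\ka<\ka_2$. For case (iii) the two-sided condition \eqref{eq1propeqHJBbotsim} reduces to $\de/(\al+q^D_{rec})<\ka<(\be-\al)/(\be+q^U_{rec})$ (see \eqref{eq1propeqHJBbotsimlargela}), which at $x^{(iii)}=(0,1-\bar x^*_{UI},\bar x^*_{UI},0)+O(\la^{-1})$ from \eqref{eqindstfixedbotMixOp6a} becomes $\ka_3<\ka<\ka_4$. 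This establishes the three explicit clauses of the theorem.

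For the count itself I would note that $x^*_{UI}$, $x^*_{DI}$ and $\bar x^*_{UI}$ are the roots in $(0,1)$ of three independent quadratic equations, defining respectively $x^*_{UI}$ (see \eqref{eqindstfixedbotUnOp2}), $x^*_{DI}$ (see \eqref{eq3propUDop}) and $\bar x^*_{UI}$ (see \eqref{eqindstfixedbotMixOp6}), whose coefficients involve disjoint groups of the structural parameters, so that no order relation among $\ka_1,\ka_2,\ka_3,\ka_4$ is forced; one then simply enumerates the arrangements of the three $\ka$-intervals $\{\ka>\ka_1\}$, $\{\ka<\ka_2\}$ and $(\ka_3,\ka_4)$ on the line. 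Whenever $\max(\ka_1,\ka_3)<\min(\ka_2,\ka_4)$ the three overlap and all three MFG solutions coexist, so the bound $3$ is sharp; in the remaining arrangements fewer coexist. The main obstacle is pure bookkeeping: keeping straight which of the nearly identical inequality systems \eqref{eqindstHJBbotUnOp4}, \eqref{eqindstHJBbotDeOp4}, \eqref{eqindstHJBbotMix1Op3} is attached to which fixed point, and verifying that the $O(\la^{-1})$ errors --- present both in the locations of the fixed points and in the HJB thresholds --- only move $\ka_1,\dots,\ka_4$ by $O(\la^{-1})$, which is exactly the ``outside an interval of $\ka$ of size of order $\la^{-1}$'' proviso inherited from Proposition \ref{propHJBbotsimlargela}.
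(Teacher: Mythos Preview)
Your proposal is correct and follows exactly the route the paper takes: the paper's entire argument for this theorem is the single sentence ``From Proposition \ref{propHJBbotsimlargela} we deduce the following'', so you have simply written out in full the combination of Propositions \ref{propFixUDop}, \ref{propFixMixop} and \ref{propHJBbotsimlargela} that the paper leaves to the reader. Your handling of the exclusion of case (iv) via \eqref{eqassumonbotnetsim3} (forcing all admissible $x$ into $D_1$) and the translation of the large-$\la$ HJB thresholds \eqref{eq1propeqHJBbotsimlargela}--\eqref{eq3propeqHJBbotsimlargela} into the numbers $\ka_1,\dots,\ka_4$ at the respective fixed points is precisely what is needed.
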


Thus in this case the points $\ka_1, \ka_2, \ka_3, \ka_4$ are the bifurcation points, where the phase transitions occur.

The situation when \eqref{eqassumonbotnetsim3} does not hold is analogous, though there appears an additional bifurcation
relating to $x$ crossing the border between $D_1$ and $D_2$, and the possibility of having four solutions arises.

\section{Discussion}

Our model of four basic states is of course the simplest one that takes effective account of both interaction (infection)
and rational decision making. It suggests extensions in various directions.
For instance, it is practically important to allow for the choice of various competing protection systems, leading
to a model with $2d$ basic states: $iI$ and $iS$, where $i\in \{1, \cdots , d\}$ denotes the $i$th defense system
available (which can be alternatively interpreted as the levels of protection provided by a single or different firms),
while $S$ and $I$ denote again susceptible or infected state, with all other parameters depending on $i$.
On the other hand, in the spirit of papers \cite{Kolpresres}, \cite{Ko12} that concentrate on modeling myopic behavior
(rather than rational optimization) of players one can consider the set of computer owners consisting of two groups,
rational optimizers and those changing their strategies by copying their neighbors.

The main theoretical question arising from our results concerns the rigorous relation between stationary and
dynamic MFG solutions, which in general is in front of research in the mean-field game literature.
We hope that working with our simple model with fully solved stationary version can help to get new insights in this direction.
In the present context the question can be formulated as follows. Suppose that, if at some moment of time
$N$ players are distributed according certain
frequency vector $x$ among the four basic state, each player chooses the optimal strategy $u$
arising from the solution of the stationary problem for fixed $x$ (fully described in Section \ref{secHJB}),
and the Markov evolution continues according to the generator $L$.
When two solutions are available, players may be supposed to choose the one with the lowest $\mu$ (see
see Remark \ref{remarkonvarioussol} (2)).
The resulting changes in $x$
induce the corresponding changes of $u$
specifying a well-defined Markov process on the states of $N$ agents. Intuitively, we would expect this
evolution stay near our stationary MFG solutions for large $N$ and $t$. Can one prove something like that?

\end{document}